\newtheorem{theorem}{Theorem}[section]
\newtheorem{proposition}[theorem]{Proposition}
\newtheorem{lemma}[theorem]{Lemma}
\newtheorem{corollary}[theorem]{Corollary}
\newtheorem{definition}[theorem]{Definition}
\newtheorem{remark}[theorem]{Remark}
\newtheorem{example}[theorem]{Example}
\newtheorem{problem}{Problem}
\newtheorem{conjecture}{Conjecture}
\newcommand{\I}{\mathcal I}
\newcommand{\F}{\mathbb F}
\newcommand{\G}{\mathcal{G}}
\newcommand{\GL}{\mathrm{GL}}
\newcommand{\PGL}{\mathrm{PGL}}
\newcommand{\ord}{\mathrm{ord}}
\newcommand{\doublespace}
\begin{document}
\begin{frontmatter}

\title{Construction of irreducible polynomials \\
through rational transformations}

\author{Daniel Panario}
\ead{daniel@math.carleton.ca}
\author{Lucas Reis \fnref{fn1}\corref{cor1}}
\ead{lucasreismat@gmail.com}
\author{Qiang Wang}
\ead{wang@math.carleton.ca}
\fntext[fn1]{Permanent address: Departamento de Matem\'{a}tica, Universidade Federal de Minas Gerais, UFMG, Belo Horizonte, MG, 30123-970, Brazil}
\address{School of Mathematics and Statistics, Carleton University, 1125 Colonel By Drive, Ottawa ON (Canada), K1S 5B6}
\cortext[cor1]{Corresponding author}

\begin{abstract}
Let $\F_q$ be the finite field with $q$ elements, where $q$ is a power of a prime. 
We discuss recursive methods for constructing irreducible polynomials over $\F_q$ 
of high degree using rational transformations. In particular, given a divisor 
$D>2$ of $q+1$ and an irreducible polynomial $f\in \F_{q}[x]$ of degree $n$ such 
that $n$ is even or $D\not \equiv 2\pmod 4$, we show how to obtain from $f$ a 
sequence $\{f_i\}_{i\ge 0}$ of irreducible polynomials over $\F_q$ with 
$\deg(f_i)=n\cdot D^{i}$.
\end{abstract}
\end{frontmatter}

\section{Introduction}
Let $q$ be a prime power, $\F_q$ be the finite field with $q$ elements and $\overline{\F}_q$ be its algebraic closure. If $q_0=q^n$, the finite field $\F_{q_0}$ can be constructed via the isomorphism $\F_{q_0}\cong \F_q[x]/\langle f\rangle$, where $f\in \F_q[x]$ is an irreducible polynomial of degree $n$. Though the existence of such an $f$ is known, the efficient construction of irreducible polynomials of a given degree is still an interesting research problem. In many practical situations, the construction of large fields is required; see~\cite{GR} and~\cite{SSS}. In the past few years, this problem has been considered by many authors and, in fact, the techniques employed follow a similar pattern. In general, it is considered $f\in \F_q[x]$ an irreducible polynomial of degree $n$ and a rational function $Q=\frac{g}{h}$ of degree $k>1$. From $f$ and $Q$, we consider the sequence of polynomials $\{f_i\}_{i\ge 0}$ given by $f_0=f$ and, for $i\ge 1$, $f_i^{Q}:=h^{d_{i-1}}f_{i-1}\left(\frac{g}{h}\right)$, where $d_i$ is the degree of $f_i$. The main problem relies on giving conditions on $f$ and $Q$ for which is ensured that each polynomial $f_i$ is irreducible. In other words, we are interested in the following problems:

\begin{enumerate}[(A)]
\item Given $Q$ and $n$, find an irreducible polynomial $f$ of degree $n$ such that $f^Q$ is irreducible.

\item Provided that $f$ and $f^Q$ are irreducible, is it true that $(f^Q)^Q$ is also irreducible?
\end{enumerate}

In this case, this recursive method would give a sequence $\{f_i\}_{i\ge 0}$ of irreducible polynomials whose degrees grow~\emph{exponentially}; in fact, if each $f_i$ is irreducible, then $\deg(f_i)=nk^i$, where $k$ is the degree of $Q$. The quotients $K_i=\F_q[x]/\langle f_i\rangle\cong \F_{q^{nk^i}}$ yield the tower $K_1\subset K_2\subset \cdots$ of finite fields. A general criteria on the irreducibility of compositions $f^Q$ is given in~\cite{C69}. This criteria is, perhaps, the one used in most of the previous articles: see~\cite{M90, U13}, where the rational function $Q$ has degree $2$ and~\cite{AAK}, where $Q$ has degree $p$, the characteristic of $\F_q$. 

In this paper, we discuss Questions (A) and (B) above for a special class of rational functions $Q$, introduced in~\cite{R17}, that come from an action of the group $\PGL_2(\F_q)$ on irreducible polynomials over $\F_q$. For these rational functions, we provide a constructive and a probabilistic solution to Question (A) and partially answer Question (B).
 
The structure of the paper is given as follows. In Section 2 we provide background material that is used along the way. In Section 3 we propose solutions for Question (A). In Section 4, we discuss Question (B) and in Section 5 we propose some problems for future research.

\section{Preliminaries}
In this section, we provide some basic definitions and results that are useful along this paper. Let us  fix some notation. For a prime number $r$ and a positive integer $m$, we define $\nu_r(m)\in \mathbb N$ to be the greatest power of $r$ that divides $m$.  Additionally, for relatively prime integers $a$ and $b$, let $\ord_b(a)$ be the multiplicative order of $a$ modulo $b$, i.e., the least positive integer $d$ such that $a^d\equiv 1\pmod b$. Let
\begin{itemize}
\item $\ord(\alpha):=\min\{d>0\,|\, \alpha^d=1\},$
\item $\I_k:=\{f\in \F_q[x]\,|\, f\,\text{is monic, irreducible and}\, \deg(f)=k\},$
\item $\GL_2(\F_q)\; \text{is the general linear group of order}\; 2,$
\item $\PGL_2(\F_q)\; \text{is the projective general linear group of order}\; 2$.
\end{itemize}
For $A\in \GL_2(\F_q)$, $[A]\in \PGL_2(\F_q)$ denotes the equivalence class of $A$, i.e., $$[A]=\{B\in \GL_2(\F_q)\,|\, B=\lambda \cdot A, \lambda\in \F_q^* \}.$$ Let $\F_{q}[x]$ denote the polynomial ring over $\F_q$ and let $\F_q(x)$ be the field of rational functions over $\F_q$.

\subsection{An action of $\PGL_2(\F_q)$ on the sets $\I_k$}
Given $[A]\in \PGL_2(\F_q)$, with $A=
\left(\begin{matrix}
a&b\\
c&d
\end{matrix}\right)
$, and $f\in \I_k$, we set
$$[A]\circ f=\lambda_{A, f}\cdot (bx+d)^k\cdot f\left(\frac{ax+c}{bx+d}\right),$$
where $\lambda_{A, f}\in \F_q$ is the unique element in $\F_q$ such that $\lambda_{A, f}\cdot (bx+d)^k\cdot f\left(\frac{ax+c}{bx+d}\right)$ is monic.  
As pointed out in \cite{ST}, the group $\PGL_2(\F_q)$ acts on each set $\I_k$ with $k\ge 2$, via the compositions $[A]\circ f$. We set 
$$C_A=\{f\in \F_q[x]\,|\, f\,\text{is monic, irreducible, of degree at least two and}\, [A]\circ f=f\},$$
and, for $k\ge 2$, we set,
$$C_A(k)=C_A\cap \I_k.$$

In the same paper, the authors obtain a characterization of the elements in $C_A$ for any $[A]\in \PGL_2(\F_q)$: these {\em invariant} polynomials appear as the irreducible factors of a special class of polynomials over $\F_q$. They also prove that, if $D=\ord([A])$, then any polynomial $f\in C_A$ is either quadratic or has degree divisible by $D$. In particular, $|C_A(k)|=0$ if $k>2$ and $k$ is not divisible by $D$. Also, for $k=Dm$, they prove that
$$|C_A(Dm)|\approx \frac{\varphi(D)}{Dm}q^m,$$
where $\varphi$ is the Euler function. Here, $a_n\approx b_n$ means $\lim\limits_{n\to\infty}\frac{a_n}{b_n}=1$.

\subsubsection{Some recent results}
For each $[A]\in \PGL_2(\F_q)$ of order $D>2$, there exists a rational function $Q=Q_A$ of degree $D$ with the property that any polynomial of degree $Dm>2$ in $C_A$ is of the form $f^Q$, for some polynomial $f$ of degree $m$, see~\cite{R17}. The main idea relies on considering four special classes of elements in $\PGL_2(\F_q)$. For the elements in these classes that are in reduced form (in the sense of Definition 2.6 of~\cite{R17}), the rational function $Q_A$ is readily given. Namely, the main results in Section 3 of~\cite{R17} can be summarized as follows.

\begin{theorem}\label{thm:sec-6-4}
Let $A\in \GL_2(\F_q)$ such that $A$ has one of the following forms
$$\left(\begin{matrix}
a&0\\
0&1
\end{matrix}\right), \left(\begin{matrix}
1&0\\
1&1
\end{matrix}\right), \left(\begin{matrix}
0&1\\
b&0
\end{matrix}\right), \left(\begin{matrix}
0&c\\
1&1
\end{matrix}\right),$$
and let $D$ be the order of $[A]$ in $\PGL_2(\F_q)$. Then the elements of $C_A$ of degree $Dm>2$ are the monic irreducible polynomials of the form $f^{Q_A}=h_A^{m}\cdot f(Q_A)$, where $f\in \F_q[x]$ has degree $m$ and $Q_A=\frac{g_A}{h_A}\in \F_q(x)$ is given as follows:
\begin{itemize}
\item $Q_A(x)=x^D$ if $A=\left(\begin{matrix}
a&0\\
0&1
\end{matrix}\right)$, and $a\in \F_q^*$ is such that $\ord(a)=D$,

\item $Q_A(x)=x^p-x$ if $A=\left(\begin{matrix}
1&0\\
1&1
\end{matrix}\right)$,

\item $Q_A(x)=\frac{x^2+b}{x}$ if $A=\left(\begin{matrix}
0&1\\
b&0
\end{matrix}\right)$ is such that $x^2-b\in \F_q[x]$ is irreducible,

\item $Q_A(x)=\frac{\theta(x+\theta^q)^D-\theta^q(x+\theta)^D}{(x+\theta)^D-(x+\theta^q)^D}$ if $A=\left(\begin{matrix}
0&c\\
1&1
\end{matrix}\right)$ is such that $x^2-x-c\in \F_q[x]$ is irreducible, and $\theta\in \F_{q^2}\setminus \F_q$ is an eigenvalue of $A$ (i.e., $\theta$ is a root of $x^2-x-c$).
\end{itemize}
\end{theorem}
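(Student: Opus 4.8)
The plan is to interpret membership in $C_A$ through the induced Möbius action on roots and then realize $Q_A$ as a generator of the fixed field of that action. Writing $A=\left(\begin{matrix}a&b\\c&d\end{matrix}\right)$, the substitution defining $[A]\circ f$ corresponds to the Möbius transformation $M_A\colon \alpha\mapsto \frac{a\alpha+c}{b\alpha+d}$ acting on $\mathbb{P}^1(\overline{\F}_q)$, and a monic irreducible $f$ lies in $C_A$ exactly when its root set is invariant under $M_A$. Since $[A]$ has order $D$ in $\PGL_2(\F_q)$, the map $M_A$ has order $D$ and $\langle M_A\rangle$ acts on the rational function field $\F_q(x)$. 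First I would verify, case by case for the four normal forms, that the stated $Q_A$ is invariant, i.e. $Q_A\circ M_A=Q_A$, and has degree $D$ as a rational map; this is immediate for the first three forms (here $(ax)^D=x^D$ because $a^D=1$; $(x+1)^p-(x+1)=x^p-x$; and $Q_A(b/x)=Q_A(x)$) and a direct but more involved computation for $A=\left(\begin{matrix}0&c\\1&1\end{matrix}\right)$, using $\theta^2=\theta+c$. Comparing degrees gives $[\F_q(x):\F_q(Q_A)]=D=|\langle M_A\rangle|$, so $\F_q(Q_A)$ is precisely the fixed field $\F_q(x)^{\langle M_A\rangle}$.

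Next I would record the root correspondence. If $f=\prod_{i=1}^m(x-\beta_i)$ is monic of degree $m$, then $f^{Q_A}=h_A^m f(Q_A)$ equals $\prod_{i=1}^m(g_A-\beta_i h_A)$ up to a nonzero scalar, so its roots are exactly the $Q_A$-preimages of the roots $\beta_i$ of $f$, and each fiber $Q_A^{-1}(\beta_i)$ is a single $\langle M_A\rangle$-orbit. The decisive point, and the step I expect to be the main obstacle, is controlling ramification: I must show that when $Dm>2$ no root of $f^{Q_A}$ (and no root of a given $F\in C_A$ of degree $Dm$) is a fixed point of $M_A$, so that every relevant fiber has full size $D$ and the degree count yields exactly $Dm$. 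This uses that the fixed points of $M_A$ are the eigendirections of $A$, hence lie in $\F_q\cup\{\infty\}$ or in $\F_{q^2}$; a root of an irreducible polynomial of degree $Dm>2$ generates an extension of degree greater than $2$ and therefore cannot be such a fixed point. This is exactly the dichotomy recalled above, that a polynomial in $C_A$ is quadratic or has degree divisible by $D$, and $Dm>2$ is what rules out the quadratic case.

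With these tools the two inclusions follow. For the forward direction, suppose $f\in\I_m$ and $f^{Q_A}$ is irreducible of degree $Dm>2$. Since $Q_A$ has coefficients in $\F_q$ it commutes with the Frobenius $\phi$, and its fibers are $\langle M_A\rangle$-orbits; hence the root set of $f^{Q_A}$ is $M_A$-invariant, which gives $[A]\circ f^{Q_A}=f^{Q_A}$ and so $f^{Q_A}\in C_A$. For the converse, take $F\in C_A$ of degree $Dm>2$ with root set $S$; then $S$ is a single $\phi$-orbit that is $M_A$-invariant, and by the ramification analysis it splits into $m$ full $\langle M_A\rangle$-orbits. Because $Q_A$ commutes with $\phi$ and collapses each orbit to one point, $Q_A(S)$ is a $\phi$-orbit of size $m$, i.e. the root set of a unique monic $f\in\I_m$. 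Comparing root sets and degrees then forces $F=f^{Q_A}$, which completes the characterization.
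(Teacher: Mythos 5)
You should know at the outset that the paper itself contains no proof of this statement: Theorem~\ref{thm:sec-6-4} is presented as a summary of the main results of Section~3 of~\cite{R17}, so your proposal is being compared against a citation rather than an argument. With that said, your outline is essentially sound and self-contained, and it is a legitimate reconstruction of the omitted proof rather than a shortcut around it. Its ingredients closely parallel the machinery that the paper does develop later for its own results: reading membership in $C_A$ off the invariance of the root set under the M\"obius map attached to $A$, the identity $f^{Q_A}=\prod_i(g_A-\beta_i h_A)$ giving the fiber--root correspondence, and the fact that fixed points of nonidentity elements of $\PGL_2(\F_q)$ satisfy a quadratic over $\F_q$, hence cannot be roots of an irreducible of degree $Dm>2$. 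This is exactly the style of argument used in Proposition~\ref{EDF} (where the orbit elements $[A_c]^i\circ\gamma$ are shown pairwise distinct via the ``degree at most $2$'' contradiction) and in Lemma~\ref{lem:aux-dynamic} (where $Q_c$ is conjugated to $x^D$ via $P_c$), so your fixed-field framing buys a uniform treatment of all four normal forms at once, which the paper never spells out.

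One step needs tightening. To conclude that every relevant fiber is a full orbit of size $D$, you must rule out roots fixed by \emph{any} nontrivial element of $\langle M_A\rangle$, not merely by $M_A$ itself: when $D$ is composite (which happens for the fourth form, since $D$ can be any divisor of $q+1$), a point fixed by $M_A^k\neq\mathrm{id}$ but not by $M_A$ has orbit of size a proper divisor of $D$, and your stated condition does not exclude this. The repair is immediate---each $M_A^k$ is induced by $A^k\in\GL_2(\F_q)$, so its fixed points also lie in $\F_{q^2}\cup\{\infty\}$ and the same degree argument applies---but it must be said. Two smaller glosses are worth a line each: you should verify $\gcd(g_A,h_A)=1$ and $\deg g_A>\deg h_A$ in each of the four cases (this is what guarantees $\deg(g_A-\beta h_A)=D$ exactly, so fibers have at most $D$ points and $[\F_q(x):\F_q(Q_A)]=D$); and $f^{Q_A}$ need not be monic (in the fourth case its leading coefficient is $(-1)^m$), so the final identification $F=f^{Q_A}$ holds only up to the monic normalization, an issue the paper itself also leaves implicit.
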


In particular, this result suggests the construction of irreducible polynomials of high degree.  One may consider the four types of rational functions and discuss the problems (A) and (B) given in the introduction. We observe that only the degree of $Q_A$ matters. In particular, the case $Q_A$ of degree two is covered by the cases $Q_A(x)=x^2$ (in odd characteristic) and $Q_A(x)=x^2+x$ (in characteristic two). We are left to consider three cases. It turns out that the cases $Q_A(x)=x^D$ and $Q_A(x)=x^p-x$ are strongly related to some classical constructions of irreducible polynomials and are dealt  in detail in \cite{thesis}. In fact, quantitative aspects of irreducible polynomials of the form $f(x^D)$ were earlier given in~\cite{C69}. For this reason, in the present text, we focus on the fourth class of rational functions that is given in Theorem~\ref{thm:sec-6-4}.

For convenience, we set $A_c=\left(\begin{matrix}
0&1\\
c&1\end{matrix}\right)$, where $x^2-x-c$ is an irreducible polynomial over $\F_q$. Let $\theta$ and $\theta^q$ be the roots of $x^2-x-c$; these are the eigenvalues of $A_c$. In other words, $A$ is diagonalizable over $\F_{q^2}$ but not over $\F_q$.  The order $D$ of $[A_c]$ is then the minimal positive integer $d$ such that $\theta^d=\theta^{qd}$, i.e., $\theta^{(q-1)d}=1$. As pointed out in \cite{R17}, $D>2$ is always a divisor of $q+1$ and the converse is also true, i.e., for any divisor $D>2$ of $q+1$, there exists $c\in \F_q$ such that $x^2-x-c$ is irreducible over $\F_q$ and $[A_c]$ has order $D$; see Lemma 2.7 and Proposition 2.8 of~\cite{R17}. In the rest of this paper, we always consider $c\in \F_q$ such that $\ord([A_c]):=D>2$ is a divisor of $q+1$.

\begin{definition}\label{def:Q_c}
Set $g_c(x)=\frac{\theta(x+\theta^q)^D-\theta^q(x+\theta)^D}{\theta^q-\theta}$ and $h_c(x)=\frac{(x+\theta)^D-(x+\theta^q)^D}{\theta^q-\theta}$. The rational function $$Q_c(x)=\frac{g_c(x)}{h_c(x)}=\frac{\theta(x+\theta^q)^D-\theta^q(x+\theta)^D}{(x+\theta)^D-(x+\theta^q)^D},$$
is the canonical rational function associated to $A_c$. Also, for $f\in \F_q[x]$ of degree $m$, we set
$$f^{Q_c}=h_c^m\cdot f\left(\frac{g_c}{h_c}\right).$$
\end{definition}

According to Proposition 3.7 and Theorem 3.8 of~\cite{R17}, we have the following result. 

\begin{theorem}\label{main} The following hold.
\begin{enumerate}
\item The polynomials $g_c, h_c$ are relatively prime and their coefficients lie in $\F_q$. 
\item The equality $Q_c\left(\frac{c}{x+1}\right)=Q_c(x)$ holds.
\item The poles of $Q_c$ over $\overline{\F}_q$ are in $\F_{q^2}$.
\item For each positive integer $m$ such that $Dm>2$, the elements of $C_{A_c}(Dm)$ are exactly the monic irreducible polynomials of the form $$f^{Q_c}=h_c^m\cdot f\left(\frac{g_c}{h_c}\right),$$ where $f\in \F_q[x]$ has degree $m$.
\end{enumerate}
\end{theorem}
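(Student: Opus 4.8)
The plan is to treat the four assertions in order, since (1)--(3) are short computations with the eigenvalues $\theta,\theta^q$ of $A_c$, while (4) carries the real content. Throughout I would use the three basic relations coming from $x^2-x-c$ and the order of $[A_c]$: namely $\theta+\theta^q=1$, $\theta\theta^q=-c$, and $\theta^{(q-1)D}=1$, the last equivalently written $\theta^D=(\theta^q)^D=:\Lambda$. For part (1), I would apply the Frobenius $\sigma\colon t\mapsto t^q$ coefficientwise to $g_c$ and $h_c$; since $\sigma$ interchanges $\theta$ and $\theta^q$ and sends $\theta^q-\theta$ to its negative, both numerators and the denominator pick up a single sign, so $\sigma$ fixes $g_c$ and $h_c$, and their coefficients lie in $\F_q$. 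Coprimality I would get by ruling out a common root $\alpha\in\overline{\F}_q$: $h_c(\alpha)=0$ forces $(\alpha+\theta)^D=(\alpha+\theta^q)^D$, and feeding this into $g_c(\alpha)=0$ gives $(\theta-\theta^q)(\alpha+\theta^q)^D=0$, whence $\alpha=-\theta^q=-\theta$, contradicting $\theta\neq\theta^q$.

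For part (2) the key computation is $\frac{c}{x+1}+\theta=\frac{\theta(x+\theta)}{x+1}$ and $\frac{c}{x+1}+\theta^q=\frac{\theta^q(x+\theta^q)}{x+1}$, which use $c=-\theta\theta^q$ and $1-\theta^q=\theta$. Substituting these into $Q_c$ clears a common factor $(x+1)^{-D}$ from numerator and denominator and, after using $\theta^D=(\theta^q)^D=\Lambda$ to factor out $\Lambda$, returns exactly $Q_c(x)$. For part (3), since $g_c,h_c$ are coprime the poles of $Q_c$ are the roots of $h_c$, i.e. the $\alpha$ with $\bigl(\frac{\alpha+\theta}{\alpha+\theta^q}\bigr)^D=1$; writing $\frac{\alpha+\theta}{\alpha+\theta^q}=\zeta$ for a $D$-th root of unity $\zeta\neq1$ gives $\alpha=\frac{\zeta\theta^q-\theta}{1-\zeta}$, and because $D\mid q+1$ all such $\zeta$ lie in $\F_{q^2}$, as do $\theta,\theta^q$, so $\alpha\in\F_{q^2}$.

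Part (4) I would prove by two inclusions (or else deduce it from Theorem~\ref{thm:sec-6-4} applied to $A_c$, once one checks that its canonical rational function equals $Q_c$). For the forward inclusion, the identities above give $h_c\bigl(\frac{c}{x+1}\bigr)=\frac{\Lambda}{(x+1)^D}h_c(x)$, hence $(x+1)^{Dm}\,(f^{Q_c})\bigl(\frac{c}{x+1}\bigr)=\Lambda^m f^{Q_c}(x)$ by part (2); after normalizing to monic this reads $[A_c]\circ f^{Q_c}=f^{Q_c}$, so every irreducible $f^{Q_c}$ of degree $Dm$ lies in $C_{A_c}(Dm)$. For the converse, I would take $F\in C_{A_c}(Dm)$ with a root $\beta\in\F_{q^{Dm}}$; the M\"obius map $\mu(x)=\frac{c}{x+1}$ has order $D$ and commutes with the Frobenius, so it permutes the $Dm$ roots of $F$. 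Since the fixed points of every nontrivial power of $\mu$ lie in $\F_{q^2}$ while $\deg\beta=Dm>2$, the $\langle\mu\rangle$-orbits on the roots all have size $D$, giving $m$ orbits; as $Q_c$ is $\mu$-invariant and generically injective on $\langle\mu\rangle$-orbits, $Q_c(\beta)$ has exactly $m$ distinct $\F_q$-conjugates, so its minimal polynomial $f$ has degree $m$. Finally $h_c(\beta)\neq0$ by part (3), so $f^{Q_c}(\beta)=h_c(\beta)^m f(Q_c(\beta))=0$, and comparing degrees with the irreducible $F$ yields $F=f^{Q_c}$ up to a scalar.

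The main obstacle is the converse half of part (4): the delicate points are proving that $\langle\mu\rangle$ acts freely on the roots of $F$ (so that orbits have size exactly $D$) and that $Q_c$ separates these orbits, which together pin down $[\F_q(Q_c(\beta)):\F_q]=m$ rather than a proper divisor of $m$. The freeness relies on the pole and fixed-point analysis of parts (2)--(3) together with $Dm>2$, while the separation uses that $Q_c$ has degree $D$ and is invariant under the order-$D$ group $\langle\mu\rangle$, so that its generic fibres are precisely the $\langle\mu\rangle$-orbits.
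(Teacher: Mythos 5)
Your proposal is correct, but there is no proof in the paper to compare it against: the paper does not prove Theorem~\ref{main} at all — it simply imports the statement from Proposition 3.7 and Theorem 3.8 of~\cite{R17}. Judged on its own, your argument holds up. The Frobenius-swap computation for (1), the identities $\frac{c}{x+1}+\theta=\frac{\theta(x+\theta)}{x+1}$ and $\frac{c}{x+1}+\theta^q=\frac{\theta^q(x+\theta^q)}{x+1}$ combined with $\theta^D=(\theta^q)^D=\Lambda\in\F_q^*$ for (2), and the explicit solution $\alpha=\frac{\zeta\theta^q-\theta}{1-\zeta}$ with $\zeta^D=1$, $\zeta\in\F_{q^2}$ for (3) are all valid. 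In (4), the forward inclusion correctly unwinds the paper's definition of $[A_c]\circ f$ (the scalar $\Lambda^m\in\F_q^*$ is absorbed by the normalization $\lambda_{A,f}$; you should also record that $\deg h_c=D-1$, which uses $\gcd(D,p)=1$, so that $\deg f^{Q_c}=Dm$). Your converse is sound once the phrase ``generically injective on $\langle\mu\rangle$-orbits'' is replaced by the precise counting you in fact sketch: each fiber of $Q_c$ is the root set of $g_c-\alpha h_c$ and so has at most $D$ elements, while every fiber through a root of $F$ contains a full orbit of size exactly $D$ (an orbit coincidence would force a nontrivial degree-$\le 2$ relation over $\F_q$, impossible since $Dm>2$); hence fibers equal orbits, distinct orbits have distinct $Q_c$-values, and transitivity of Frobenius on the roots of $F$ gives $\deg_q Q_c(\beta)=m$. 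It is worth pointing out that this orbit/fiber mechanism is precisely the one the paper itself uses to prove Proposition~\ref{EDF}, so your route re-derives the cited result of~\cite{R17} with the same toolkit the paper applies to its new results: the citation buys the authors brevity, while your argument buys self-containedness and yields Proposition~\ref{EDF} as a byproduct of the same analysis.
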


We know that there exist many irreducible polynomials of degree $Dm$ that are invariant under $[A_c]$. In fact, according to Theorem 4.7 of~\cite{R18},
\begin{equation}\label{enumeration-reis} |C_{A_c}(Dm)|=\frac{\varphi(D)}{Dm}\sum_{d|m\atop \gcd(d, D)=1}(q^{m/d}+\epsilon(m/d))\mu(d),\end{equation}
where $\epsilon(s)=(-1)^{s+1}$.
Therefore, there are many polynomials $f\in \F_q[x]$ of degree $m$ such that $f^{Q_c}$  is irreducible and has degree $Dm$. In the following subsection, we provide background material for the study of irreducible polynomials arising from the rational functions $Q_c$.

\subsection{Polynomials over finite fields}
\medskip

\begin{definition}
For $\alpha\in \overline{\F}_{q}$ and a positive integer $s$, we define the degree of $\alpha$ over $\F_{q^s}$ as the degree $d$ of the minimal polynomial of $\alpha$ over $\F_{q^s}$. We write $d=\deg_{q^s}(\alpha)$.
\end{definition}
The minimal polynomial is always an irreducible polynomial over $\F_{q^s}$ and, in particular, we may define $\deg_{q^s}(\alpha)$ as the minimal positive integer $d$ such that $\alpha\in \F_{q^{ds}}$. For instance, the elements of degree one over $\F_q$ are exactly the elements of $\F_q$: for $a\in \F_q$, its minimal polynomial is $x-a$. Of course, the degree of an element depends on the base field that we are working: an element $\alpha\in \F_{q^2}\setminus \F_q$ satisfies $\deg_q(\alpha)=2$ and $\deg_{q^2}(\alpha)=1$. The following classical result gives a criteria for when a polynomial is irreducible.

\begin{lemma}\label{degree}
Let $g \in \F_{q^d}[x]$ be a polynomial of degree $n$ and $\alpha\in \overline{\F}_q$ be any of its roots. Then $g$ is irreducible over $\F_{q^d}$ if and only if $\deg_{q^d}(\alpha)=n$.
\end{lemma} 

The following lemma shows a relation between the degree and the multiplicative order of an element $\alpha\in \overline{\F}_q^*$.

\begin{lemma}\label{lem:deg-order}
Let $\alpha\in \overline{\F}_q^*$ be an element with multiplicative order $e$. Then $\deg_q(\alpha)=\ord_e(q)$.
\end{lemma}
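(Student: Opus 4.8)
The plan is to prove Lemma~\ref{lem:deg-order}, which asserts that for $\alpha\in \overline{\F}_q^*$ of multiplicative order $e$, we have $\deg_q(\alpha)=\ord_e(q)$. The conceptual key is the characterization of $\deg_q(\alpha)$ recalled just above the lemma: $\deg_q(\alpha)$ is the minimal positive integer $d$ such that $\alpha\in \F_{q^d}$. So the entire proof reduces to translating the membership condition $\alpha\in \F_{q^d}$ into an arithmetic condition on $e$ and $q$, and then matching the two minimality characterizations.

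First I would observe that $\alpha\in \F_{q^d}$ if and only if $\alpha\in \F_{q^d}^*$, and that $\F_{q^d}^*$ is the cyclic group consisting of exactly the elements $\beta\in \overline{\F}_q^*$ satisfying $\beta^{q^d-1}=1$, i.e.\ whose multiplicative order divides $q^d-1$. Hence $\alpha\in \F_{q^d}$ if and only if $e\mid q^d-1$, which is the same as $q^d\equiv 1\pmod e$. This is the crucial equivalence; note it uses implicitly that $\gcd(q,e)=1$, which holds because $\alpha\in \overline{\F}_q^*$ forces $e$ to be coprime to the characteristic and in fact $e\mid q^d-1$ for some $d$, so $\ord_e(q)$ is well defined.

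Next I would combine the two minimality statements. On one hand, $\deg_q(\alpha)$ is the least $d>0$ with $\alpha\in \F_{q^d}$; on the other hand, by the equivalence just established this is the least $d>0$ with $q^d\equiv 1\pmod e$, which is precisely the definition of $\ord_e(q)$ given in the Preliminaries. Therefore $\deg_q(\alpha)=\ord_e(q)$, completing the argument.

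This proof is essentially a chain of definitional equivalences, so I do not expect a genuine obstacle; the only point requiring a little care is the well-definedness of $\ord_e(q)$, that is, checking $\gcd(q,e)=1$ so that some power of $q$ is indeed $\equiv 1\pmod e$. I would dispatch this by noting $\alpha$ lies in some finite field $\F_{q^s}$, whence $e=\ord(\alpha)\mid q^s-1$ and thus $e$ is coprime to $q$. With that remark in place, the equivalence $\alpha\in\F_{q^d}\iff q^d\equiv 1\pmod e$ and the matching of minimal exponents give the result immediately.
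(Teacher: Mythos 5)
Your proof is correct and follows essentially the same route as the paper's: both characterize $\deg_q(\alpha)$ as the least $d$ with $\alpha\in\F_{q^d}$, translate this (using $\alpha\ne 0$) into $\alpha^{q^d-1}=1$, i.e.\ $q^d\equiv 1\pmod e$, and then match the two minimality conditions. Your extra remark verifying $\gcd(q,e)=1$ so that $\ord_e(q)$ is well defined is a small point of care the paper leaves implicit, but it does not change the argument.
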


\begin{proof}
Recall that $\deg_q(\alpha)$ is the least positive integer $d$ such that $\alpha\in \F_{q^d}$. The latter is equivalent to $\alpha^{q^d}=\alpha$ and, since $\alpha\ne 0$, the last equality holds if and only if $\alpha^{q^d-1}=1$. From definition, $\alpha^{q^d-1}=1$ if and only if $q^d\equiv 1\pmod e$ and the least positive integer with this property is $d=\ord_e(q)$.
\end{proof}

\subsubsection{Rational transformations}
For a polynomial $f\in \F_q[x]$ of degree $n$ and a rational function $Q=g/h\in \F_q(x)$, the \emph{$Q$-transform} of $f$ is the following polynomial:
$$f^Q:=h^n\cdot f\left(\frac{g}{h}\right).$$

In the following theorem, we show some basic properties of the $Q$-transforms. Its proof is direct by calculations so we omit the details.

\begin{proposition}\label{prop:Q-transform}
Let $f, g\in \F_q[x]$ and let $Q=g/h\in \F_q(x)$ be a rational function of degree $d$. Suppose that $\deg f=n>1$ and let $a_g$ and $a_h$ be the leading coefficients of $g$ and $h$, respectively. The following hold.
\begin{enumerate}[(a)]
\item If $\deg (g)> \deg (h)$ , $\deg(f^Q)=dn$. 
\item $(f\cdot g)^Q=f^Q\cdot g^Q$. In particular, if $f^Q$ is irreducible over $\F_q$, so is $f$.
\end{enumerate}
\end{proposition}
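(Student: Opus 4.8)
The plan is to treat the two parts separately, in each case expanding the $Q$-transform explicitly and tracking the degrees of the terms that appear.

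For part (a), I would write $f=\sum_{i=0}^{n} c_i x^{i}$ with $c_n\neq 0$ and substitute to obtain
$$f^{Q}=h^{n}\sum_{i=0}^{n} c_i\left(\tfrac{g}{h}\right)^{i}=\sum_{i=0}^{n} c_i\, g^{i} h^{n-i}.$$
Since $\deg g>\deg h$ and $Q$ has degree $d=\max(\deg g,\deg h)=\deg g$, the $i$-th summand has degree $i\deg g+(n-i)\deg h$, which is \emph{strictly increasing} in $i$. Hence the summands have pairwise distinct degrees, the largest being that of $c_n g^{n}$, namely $n\deg g=nd$; no cancellation against the lower-degree terms is possible, so $\deg(f^{Q})=nd$ and the leading coefficient of $f^{Q}$ is $c_n a_g^{n}$. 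The only point to verify here is the strict monotonicity of the degrees, which is immediate from $\deg g>\deg h$.

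For the multiplicativity in part (b), let $f_1,f_2$ be two polynomials with $n_i=\deg f_i$. A direct computation gives
$$(f_1 f_2)^{Q}=h^{n_1+n_2}\,(f_1 f_2)\!\left(\tfrac{g}{h}\right)=\Big(h^{n_1} f_1\!\left(\tfrac{g}{h}\right)\Big)\Big(h^{n_2} f_2\!\left(\tfrac{g}{h}\right)\Big)=f_1^{Q}\, f_2^{Q},$$
where I use $\deg(f_1 f_2)=n_1+n_2$ (valid since $\F_q[x]$ is a domain) to split the exponent of $h$. For the consequence I would argue by contrapositive: if $f$ factors nontrivially as $f=f_1 f_2$ with $\deg f_i\ge 1$, then $f^{Q}=f_1^{Q} f_2^{Q}$, and it remains to check that each $f_i^{Q}$ is non-constant so that this factorization of $f^{Q}$ is itself nontrivial. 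This is exactly where part (a) is invoked: in the situation of interest $\deg g>\deg h$, so part (a) applies to each factor and yields $\deg(f_i^{Q})=d\,\deg f_i\ge d\ge 1$, making both $f_1^{Q}$ and $f_2^{Q}$ non-constant and $f^{Q}$ reducible.

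The main subtlety, and the one place a degree condition is genuinely required, is precisely this non-constancy. When $\deg g=\deg h$ a factor $x-\beta$ with $\beta=a_g/a_h$ is sent to $g-\beta h$, a polynomial of degree strictly less than $d$ which may even be a constant; in that degenerate case a reducible $f$ can produce an irreducible $f^{Q}$, so the implication fails without controlling the degrees of the factors. Appealing to part (a) under the hypothesis $\deg g>\deg h$ is exactly what rules this out, and this is the obstacle I expect to be the crux of the argument; the multiplicativity and the degree computation in (a) are otherwise routine.
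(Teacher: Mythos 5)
Your proof is correct, and it is exactly the ``direct calculation'' that the paper alludes to when it omits the details: expanding $f^Q=\sum_i c_i g^i h^{n-i}$, noting the summand degrees are strictly increasing under $\deg g>\deg h$, and using multiplicativity plus non-constancy of the factors' transforms for the contrapositive in (b). Your closing observation --- that the ``in particular'' clause genuinely needs $\deg g>\deg h$ (or some non-degeneracy), since for $\deg g=\deg h$ a factor $x-a_g/a_h$ can map to a constant and a reducible $f$ can then yield an irreducible $f^Q$ --- is a worthwhile clarification of a hypothesis the paper's statement leaves implicit, and it is satisfied in the paper's application since $\deg g_c=D>D-1=\deg h_c$.
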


We observe that $Q_c=\frac{g_c}{h_c}$ with $\deg(g_c)=D>D-1=\deg(h_c)$. In particular, for any polynomial $f\in \F_q[x]$ of degree $n$, $f^{Q_c}$ has degree $nD$, where $D=\deg(Q_c)$ is the order of $[A_c]$ in $\PGL_2(\F_q)$.

\subsubsection{Spin of polynomials}
Following the notation of \cite{mullen}, we introduce the spin of a polynomial and present  some basic results  without  proof. For more details, see Section 4 in  \cite{mullen}.
For a polynomial $f\in \F_{q^d}[x]$ such that $f(x)=x^{m}+a_{m-1}x^{m-1}+\cdots+a_1x+a_0$, we define the following polynomial:
$$f^{(i)}(x)=x^{m}+a_{m-1}^{q^i}x^{m-1}+\cdots+a_1^{q^i}x+a_0^{q^i}.$$
Note that $f^{(s)}=f$ if $s$ is the least common multiple of the numbers $\deg_{q^d}(a_i)$. We set $s(f)=s$.

\begin{definition}
The spin of $f\in \F_{q^d}[x]$ is defined as the following polynomial:
$$S_f(x)=\prod_{i=0}^{s(f)-1}f^{(i)}(x).$$
\end{definition}

\begin{remark}
If $f\in \F_{q^d}[x]$ is not monic and $g\in \F_{q^d}[x]$ is the unique monic polynomial such that $f=\lambda g$ for some $\lambda\in \F_{q^d}^*$, we set $S_f(x):=\lambda^{\frac{q^{ds(g)}-1}{q^d-1}} \cdot S_g(x)$. This is the natural extension of the spin to polynomials that are not monic.
\end{remark}

The following lemma provides a way of computing the spin of a special class of polynomials. Its proof follows directly from calculations so we omit them.
\begin{lemma}[Lemma 11, \cite{mullen}]\label{spin:1}
For polynomials $f, g\in \F_{q}[x]$ and $\lambda$ such that $\deg_q(\lambda)=d$, the spin of the polynomial $f(x)-\lambda g(x)$ equals $$\prod_{i=1}^{d}(f(x)-\lambda^{q^i}g(x)).$$
\end{lemma}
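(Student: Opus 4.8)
The plan is to exploit the fact that $f$ and $g$ have coefficients in the base field $\F_q$, so that the coefficient-wise Frobenius operation $F\mapsto F^{(i)}$ appearing in the definition of the spin acts on $F(x):=f(x)-\lambda g(x)$ in a completely transparent way: it merely replaces $\lambda$ by $\lambda^{q^i}$.

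First I would record how the operators $F\mapsto F^{(i)}$ act on $F$. Writing $f(x)=\sum_j a_j x^j$ and $g(x)=\sum_j b_j x^j$ with $a_j,b_j\in\F_q$, the coefficient of $x^j$ in $F$ is $a_j-\lambda b_j$. Since the map $c\mapsto c^{q^i}$ is a field homomorphism fixing $\F_q$ pointwise, it sends $a_j-\lambda b_j$ to $a_j-\lambda^{q^i} b_j$. Hence $F^{(i)}(x)=f(x)-\lambda^{q^i} g(x)$ for every $i\ge 0$; this is the single computation that drives the whole argument.

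Next I would pin down $s(F)$, defined as the least positive integer $s$ with $F^{(s)}=F$. By the previous step this amounts to $\lambda^{q^s} g(x)=\lambda g(x)$, and since $g\ne 0$ it is equivalent to $\lambda^{q^s}=\lambda$, i.e. $\lambda\in\F_{q^s}$. By the definition of $\deg_q$, the least such $s$ is exactly $\deg_q(\lambda)=d$, so $s(F)=d$. Substituting into the definition of the spin gives
$$S_F(x)=\prod_{i=0}^{d-1}F^{(i)}(x)=\prod_{i=0}^{d-1}\bigl(f(x)-\lambda^{q^i} g(x)\bigr).$$
Finally, since $\lambda^{q^d}=\lambda$, the factor indexed by $i=d$ coincides with the one indexed by $i=0$, so a trivial reindexing turns the product over $0\le i\le d-1$ into the product over $1\le i\le d$ claimed in the statement.

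The only genuinely delicate point is the leading-coefficient normalization. When $\deg f>\deg g$ and $f$ is monic, the polynomial $F$ is monic and the basic definition of the spin applies verbatim, so the three steps above suffice. In the remaining cases one must invoke the extension of the spin to non-monic polynomials from the Remark, verifying that the normalizing factor $\lambda^{(q^{d\,s(G)}-1)/(q^d-1)}$ is reproduced by the right-hand product. This bookkeeping is the step I expect to require the most care, although it remains a direct verification once the identity $F^{(i)}=f-\lambda^{q^i} g$ and the value $s(F)=d$ are established.
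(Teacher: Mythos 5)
Your proposal is correct and takes essentially the same approach as the paper: the paper omits the proof entirely, stating that it ``follows directly from calculations'' (citing Lemma 11 of \cite{mullen}), and your three steps --- the identity $F^{(i)}(x)=f(x)-\lambda^{q^i}g(x)$, the determination $s(F)=\deg_q(\lambda)=d$, and the reindexing using $\lambda^{q^d}=\lambda$ --- constitute exactly that omitted calculation. Your closing caveat about the non-monic normalization is a fair flag of a detail the paper itself glosses over, not a gap in your argument.
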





Using the concept of spins, we may derive the factorization of polynomials through rational transformations.

\begin{proposition}[Lemma 13, \cite{mullen}]\label{spin:3}
Let $Q=\frac{g}{h}$ be a rational function over $\F_q$ and let $f\in \F_{q}[x]$ be an irreducible polynomial of degree $d$. If $\alpha\in \F_{q^d}$ is any root of $f$, the factorization of $f^Q$ over $\F_q$ is given by 
$$\prod_{R}S_R(x),$$
where $R$ runs through the irreducible factors of $g-\alpha h$ over $\F_{q^d}$. 
\end{proposition}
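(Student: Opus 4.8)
The plan is to identify $f^Q$ with the spin of the single polynomial $g-\alpha h\in\F_{q^d}[x]$ and then to break that spin into the spins of its irreducible factors over $\F_{q^d}$. The engine behind this is that the operation $f\mapsto f^{(i)}$, raising coefficients to the $q^i$-th power, is exactly the Galois action of $\mathrm{Gal}(\F_{q^d}/\F_q)$ on $\F_{q^d}[x]$, and that it commutes with forming $Q$-transforms precisely because $g$ and $h$ have coefficients in $\F_q$.

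First I would split $f$ over its root field. Since $f$ is irreducible of degree $d$ with root $\alpha\in\F_{q^d}$, its roots are the conjugates $\alpha,\alpha^q,\dots,\alpha^{q^{d-1}}$, so (taking $f$ monic) $f(x)=\prod_{i=0}^{d-1}(x-\alpha^{q^i})$. Substituting $g/h$ and clearing the $d$ copies of $h$ gives $f^Q=h^d f(g/h)=\prod_{i=0}^{d-1}(g-\alpha^{q^i}h)$. The key observation is that $g-\alpha^{q^i}h=(g-\alpha h)^{(i)}$: a coefficient of $g-\alpha h$ has the shape $g_j-\alpha h_j$ with $g_j,h_j\in\F_q$, and raising it to the $q^i$-th power returns $g_j-\alpha^{q^i}h_j$. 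Since $\alpha$ is a root of the degree-$d$ irreducible $f$ we have $\deg_q(\alpha)=d$, so Lemma~\ref{spin:1} applies and shows that $\prod_{i=0}^{d-1}(g-\alpha h)^{(i)}$ is exactly the spin $S_{g-\alpha h}$; hence $f^Q=S_{g-\alpha h}$.

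Next I would factor $g-\alpha h=\ell\prod_R R$ over $\F_{q^d}$ into its leading coefficient $\ell$ and monic irreducible factors $R$, counted with multiplicity. Because $f\mapsto f^{(i)}$ is a ring homomorphism it distributes over this factorization, so $S_{g-\alpha h}=\prod_{i=0}^{d-1}(g-\alpha h)^{(i)}=\left(\prod_{i=0}^{d-1}\ell^{q^i}\right)\prod_R\prod_{i=0}^{d-1}R^{(i)}$. For a fixed factor $R$ the conjugates $R^{(i)}$ repeat with period $s(R)$, whence $\prod_{i=0}^{d-1}R^{(i)}=S_R^{d/s(R)}$; the outer product $\prod_{i=0}^{d-1}\ell^{q^i}$ is a norm that only records the leading coefficient of $f^Q$, so matching it is routine bookkeeping (and is vacuous when $f^Q$ is normalized to be monic). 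What remains is to collapse every exponent $d/s(R)$ to $1$.

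The crux, and the step I expect to be the main obstacle, is to show that each irreducible factor $R$ of $g-\alpha h$ has full spin period $s(R)=d$. I would argue by contradiction: if $s(R)=e<d$ then $R^{(e)}=R$ divides $(g-\alpha h)^{(e)}=g-\alpha^{q^e}h$, and subtracting from $g-\alpha h$ gives $R\mid(\alpha^{q^e}-\alpha)h$. As $\deg_q(\alpha)=d>e$ forces $\alpha^{q^e}\ne\alpha$, the factor $\alpha^{q^e}-\alpha$ is a nonzero constant, so $R\mid h$ and therefore $R\mid g$, contradicting that $Q=g/h$ is in reduced form, i.e.\ $\gcd(g,h)=1$. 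The same computation shows that distinct factors of $g-\alpha h$ lie in disjoint conjugacy orbits, so no spin is produced twice. With $s(R)=d$ for every $R$ we get $\prod_{i=0}^{d-1}R^{(i)}=S_R$, and therefore $f^Q=\prod_R S_R$, as claimed.
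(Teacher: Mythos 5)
The paper never proves this proposition itself---it is imported verbatim as Lemma 13 of \cite{mullen}, with the reader referred to that paper---so there is no internal proof to compare against and your argument must stand on its own. Most of it does: the decomposition $f^Q=\prod_{i=0}^{d-1}\bigl(g-\alpha^{q^i}h\bigr)$, its identification with $S_{g-\alpha h}$ via Lemma~\ref{spin:1}, the reduction of the problem to showing $s(R)=d$ for every irreducible factor $R$ of $g-\alpha h$ over $\F_{q^d}$, and your coprimality argument for that step (if $R^{(e)}=R$ with $0<e<d$, then $R$ divides $(\alpha^{q^e}-\alpha)h$, hence divides both $h$ and $g$) are all correct, and you were right to flag that $\gcd(g,h)=1$ is the needed standing convention---it is the one used throughout the paper (e.g.\ in Lemma~\ref{coprime}), and the statement is false without it.

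There is, however, a genuine gap at the end. The proposition asserts that $\prod_R S_R$ is \emph{the factorization of $f^Q$ over $\F_q$}, i.e.\ that the spins $S_R$ are the irreducible factors of $f^Q$ over $\F_q$; this is exactly what Corollary~\ref{cor:main2} extracts from it (irreducibility of $f^Q$ when $g-\alpha h$ is irreducible, and the count and degrees of the irreducible factors otherwise). Your proof ends with the product identity $f^Q=\prod_R S_R$ and never addresses why each individual $S_R$ is irreducible over $\F_q$; that is not leading-coefficient bookkeeping but the substantive point of expressing the factorization through spins. Fortunately, your crux step already does the heavy lifting. Fix a root $\gamma$ of $R$ and let $M_\gamma\in\F_q[x]$ be its minimal polynomial over $\F_q$. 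Since $M_\gamma$ has coefficients in $\F_q$ and $R$ is the minimal polynomial of $\gamma$ over $\F_{q^d}$, applying the coefficientwise Frobenius to the divisibility $R\mid M_\gamma$ gives $R^{(i)}\mid M_\gamma^{(i)}=M_\gamma$ for every $i$; because $s(R)=d$, the polynomials $R^{(0)},\dots,R^{(d-1)}$ are pairwise distinct monic irreducibles over $\F_{q^d}$, hence pairwise coprime, so $S_R=\prod_{i=0}^{d-1}R^{(i)}$ divides $M_\gamma$. Conversely, the coefficientwise Frobenius permutes the factors of $S_R$ cyclically, so $S_R\in\F_q[x]$, and $S_R(\gamma)=0$, giving $M_\gamma\mid S_R$. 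Hence $S_R=M_\gamma$ is irreducible over $\F_q$. With this paragraph added, your proof is complete.
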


The following corollary is a direct consequence of the previous proposition.

\begin{corollary}\label{cor:main2}
Let $Q=\frac{g}{h}$ be a rational function of degree $d$ over $\F_q$ and let $f\in \F_{q}[x]$ be an irreducible polynomial of degree $n$. If $\alpha\in \F_{q^n}$ is any root of $f$, the following hold: 

\begin{enumerate}[(i)]
\item if $g-\alpha h$ is irreducible over $\F_{q^n}$, then $f^Q$ is an irreducible polynomial of degree $dn$ over $\F_q$.
\item if $g-\alpha h$ factors as $\frac{d}{d_0}$ irreducible factors of degree $d_0$ over $\F_{q^n}$, then $f^{Q}$ factors as a product of $\frac{d}{d_0}$ irreducible polynomials, each of degree $d_0n$.
\end{enumerate}
\end{corollary}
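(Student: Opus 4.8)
The plan is to derive Corollary~\ref{cor:main2} as a direct specialization of Proposition~\ref{spin:3}, so the whole argument amounts to unpacking what the factorization $\prod_R S_R(x)$ says about degrees once we impose the two hypotheses. First I would fix an irreducible $f\in\F_q[x]$ of degree $n$ with root $\alpha\in\F_{q^n}$, and apply Proposition~\ref{spin:3} with $d=n$: the factorization of $f^Q$ over $\F_q$ is $\prod_R S_R(x)$, where $R$ ranges over the irreducible factors of $g-\alpha h$ over $\F_{q^n}$. The key observation is that $\deg(g-\alpha h)=d$ (the degree of $Q$) since $Q=g/h$ has degree $d$, so the factors $R$ partition this degree-$d$ polynomial over $\F_{q^n}$.

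The central computation is to pin down $\deg(S_R)$ in terms of $\deg(R)$. For an irreducible $R\in\F_{q^n}[x]$ of degree $d_0$, the spin $S_R=\prod_{i=0}^{s(R)-1}R^{(i)}$ is precisely the monic polynomial over $\F_q$ whose roots form the full $\F_q$-orbit of the roots of $R$ under the Frobenius $x\mapsto x^q$. Concretely, if $\beta$ is a root of $R$, then $\deg_{q^n}(\beta)=d_0$, and $S_R$ is (a power of) the minimal polynomial of $\beta$ over $\F_q$, whose degree is $\deg_q(\beta)=d_0 n$ because $\beta$ lies in $\F_{q^{d_0 n}}$ and in no smaller field containing both $\F_{q^n}$ and $\F_q(\beta)$; here one uses that $\F_{q^n}\subseteq\F_{q^{d_0 n}}$ and that $R$ is irreducible of degree $d_0$ over $\F_{q^n}$. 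Thus each $S_R$ is an irreducible polynomial over $\F_q$ of degree $d_0 n$.

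With this in hand both parts follow immediately. For part (i), if $g-\alpha h$ is irreducible over $\F_{q^n}$ then there is a single factor $R$ of degree $d_0=d$, so $f^Q=S_R$ is a single irreducible polynomial over $\F_q$ of degree $dn$, as claimed. For part (ii), if $g-\alpha h$ splits into $d/d_0$ irreducible factors each of degree $d_0$ over $\F_{q^n}$, then $f^Q=\prod_R S_R$ is a product of $d/d_0$ irreducible polynomials over $\F_q$, each of degree $d_0 n$; the count $d/d_0$ is exactly the number of factors $R$, and a quick degree check confirms consistency since $(d/d_0)\cdot d_0 n = dn = \deg(f^Q)$.

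I expect the only genuine subtlety to be the claim that each spin $S_R$ is itself \emph{irreducible} over $\F_q$ (not merely a product of conjugate pieces) and has degree exactly $d_0 n$ rather than some proper divisor. This rests on the fact that the factors $R^{(i)}$ appearing in the spin are exactly the distinct $\F_q$-Frobenius conjugates of $R$, and that $s(R)=d_0 n/d_0 = n$ when the coefficients of $R$ generate $\F_{q^n}$ over $\F_q$; the degree bookkeeping must be carried out carefully using Lemma~\ref{degree} and the characterization of $\deg_{q^s}(\alpha)$ as the least $d$ with $\alpha\in\F_{q^{ds}}$. Once the relationship $\deg(S_R)=n\cdot\deg(R)$ is established cleanly, everything else is routine.
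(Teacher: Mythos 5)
Your overall route is the same one the paper intends: Corollary~\ref{cor:main2} is meant to follow from Proposition~\ref{spin:3} by showing that each spin $S_R$, for $R$ an irreducible factor of $g-\alpha h$ over $\F_{q^n}$, is itself irreducible of degree $n\cdot \deg(R)$ over $\F_q$. However, your justification of that key degree claim has a genuine gap. Knowing only that $R$ is irreducible of degree $d_0$ over $\F_{q^n}$ with root $\beta$ gives $\F_{q^n}(\beta)=\F_{q^{d_0n}}$; it does \emph{not} give $\F_q(\beta)=\F_{q^{d_0n}}$, which is what $\deg_q(\beta)=d_0n$ means. Your phrase ``$\beta$ lies in no smaller field containing both $\F_{q^n}$ and $\F_q(\beta)$'' describes the compositum $\F_{q^n}(\beta)$, not the field $\F_q(\beta)$, so the deduction is circular. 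Indeed, the statement you are trying to prove is false for an arbitrary irreducible $R\in \F_{q^n}[x]$: take $n=2$ and $R\in\F_q[x]\subset\F_{q^2}[x]$ irreducible of odd degree $d_0>1$; then $R$ stays irreducible over $\F_{q^2}$, yet $s(R)=1$ and $S_R=R$ has degree $d_0$, not $2d_0$. For the same reason, your closing remark that $s(R)=n$ ``when the coefficients of $R$ generate $\F_{q^n}$'' merely restates the difficulty without resolving it.

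The missing idea is to use that $R$ divides $g-\alpha h$, i.e., that $\beta$ is tied to $\alpha$ through $Q$. If $\beta$ is a root of $R$, then $g(\beta)=\alpha h(\beta)$, and $h(\beta)\neq 0$ because $g$ and $h$ are relatively prime (a common root would contradict $\gcd(g,h)=1$). Hence $\alpha=g(\beta)/h(\beta)\in\F_q(\beta)$, so $\F_{q^n}=\F_q(\alpha)\subseteq\F_q(\beta)$, and therefore $\F_q(\beta)=\F_{q^n}(\beta)=\F_{q^{d_0n}}$, i.e., $\deg_q(\beta)=d_0n$. Now the spin argument closes correctly: since the coefficients of $R$ lie in $\F_{q^n}$, $s(R)$ divides $n$, so $\deg (S_R)=s(R)d_0\le nd_0$; on the other hand $S_R\in\F_q[x]$ vanishes at $\beta$, so the minimal polynomial of $\beta$ over $\F_q$, of degree $d_0n$, divides $S_R$. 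This forces $s(R)=n$ and $S_R$ to be exactly that minimal polynomial, hence irreducible of degree $d_0n$. With this repair, your counting in parts (i) and (ii), including the consistency check $(d/d_0)\cdot d_0n=dn$, goes through verbatim. The precise point where your argument would fail is worth remembering: it is exactly the polynomials $R$ that cannot occur as factors of $g-\alpha h$ (those with coefficients in a proper subfield of $\F_{q^n}$) that make the unconditioned degree claim false.
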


\section{Construction of invariant polynomials}
In this section, we discuss the construction of irreducible polynomials via $Q_c$-transforms.  Here we always consider $A_c=\left(\begin{matrix}
0&1\\
c&1\end{matrix}\right)$ such that $\ord([A_c])=D>2$ divides $q+1$ and $Q_c$ is the degree-$D$ canonical rational function associated to $A_c$. We also fix $\theta$ and $\theta^q$, the eigenvalues of $A_c$. Recall that the elements of $C_{A_c}(Dn)$ arise from the $Q_c$-transform of irreducible polynomials of degree $n$. We provide some methods for generating these polynomials via $Q_c$-transforms. Our construction relies in solutions to the following problem:
\begin{enumerate}[{\bf P1}]
\item Given $n>2$ and $Q_c$, find an irreducible polynomial $f$ of degree $n$ such that $f^{Q_c}$ is also irreducible. Here, $f^{Q_c}$ has degree $Dn$.
\end{enumerate}


We provide two solutions to P1: a deterministic method that works when $D$ is a prime and a random method that works for arbitrary $D$. 

\subsection{A recursive method}
Corollary~\ref{cor:main2} motivates us to introduce the following definition.

\begin{definition}
A rational function $Q=\frac{g}{h}\in \F_q(x)$ of degree $d$ is $k$-EDF if, for any positive integer $n>k$ and any $\alpha\in \F_{q^n}$ with $\deg_q(\alpha)=n$, the polynomial $g-\alpha h \in \F_{q^n}[x]$ splits into $\frac{d}{d_0}$ irreducible factors, each of degree $d_0$, for some divisor $d_0$ of $d$. 
\end{definition}

For instance, if $p$ is the characteristic of $\F_q$ and $a\in \F_{q^n}$, it is well known that the polynomial $x^p-x-a$ is either irreducible or splits completely into linear factors over $\F_{q^n}$, hence $Q(x)=x^p-x$ is $0$-EDF. In the following proposition, we present a general family of $2$-EDF rational functions.

\begin{proposition}\label{EDF}
Let $D>2$ be any divisor of $q+1$, $[A_c]\in \PGL_2(\F_q)$ an element of order $D$ and $Q_c$ the canonical rational function associated to $A_c$. Then $Q_c$ is $2$-EDF. In addition, for any $\alpha\in \overline{\F}_q$ with $\deg_q(\alpha)\ge 3$, the polynomial $g_c-\alpha h_c$ is separable. 
\end{proposition}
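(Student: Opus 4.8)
The plan is to study the roots of $g_c-\alpha h_c$ geometrically, as a fiber of the degree-$D$ rational map $Q_c$, and to exploit the symmetry recorded in Theorem~\ref{main}(2). Let $\sigma$ denote the M\"obius transformation $x\mapsto \frac{c}{x+1}$ associated with $A_c$; since $[A_c]$ has order $D$ in $\PGL_2(\F_q)$, the map $\sigma$ has order $D$, and Theorem~\ref{main}(2) reads $Q_c\circ\sigma=Q_c$. Because $\deg g_c=D>D-1=\deg h_c$, the polynomial $g_c-\alpha h_c$ has degree exactly $D$; and because $g_c,h_c$ are coprime, a pole of $Q_c$ is never a root (there $g_c$ does not vanish) and, as $Q_c(\infty)=\infty\neq\alpha$, the point $\infty$ does not occur either. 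Hence the roots of $g_c-\alpha h_c$ are exactly the finite points $x$ with $Q_c(x)=\alpha$, i.e. the fiber $Q_c^{-1}(\alpha)$, and the cyclic group $\an{\sigma}$ acts on this fiber via $Q_c\circ\sigma=Q_c$.

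First I would locate the fixed points of $\sigma$. Solving $\frac{c}{x+1}=x$ gives $x^2+x-c=0$, whose roots are $-\theta$ and $-\theta^q$; these lie in $\F_{q^2}\setminus\F_q$, and a short computation gives $Q_c(-\theta)=-\theta$ and $Q_c(-\theta^q)=-\theta^q$. Every nontrivial power of $\sigma$ shares these two fixed points. Now suppose $\deg_q(\alpha)\ge 3$, so $\alpha\notin\F_{q^2}$, and pick a root $x_0$. From $Q_c(x_0)=\alpha\notin\{-\theta,-\theta^q\}$ we get $x_0\notin\{-\theta,-\theta^q\}$, hence $\sigma^{j}(x_0)\neq x_0$ for $0<j<D$; thus $x_0,\sigma(x_0),\dots,\sigma^{D-1}(x_0)$ are $D$ distinct points, each a root of the degree-$D$ polynomial $g_c-\alpha h_c$. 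Therefore these are all of its roots, each simple: $g_c-\alpha h_c$ is separable (which proves the second assertion), and its root set is a single regular $\an{\sigma}$-orbit of size $D$.

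Finally, to obtain the $2$-EDF property I take $\deg_q(\alpha)=n>2$ and let $\phi$ be the $q^n$-power Frobenius of $\overline{\F}_q$. The coefficients of $g_c-\alpha h_c$ lie in $\F_q[\alpha]=\F_{q^n}$, so $\phi$ permutes its roots; and since $\sigma\in\PGL_2(\F_q)$ is defined over $\F_q$, it commutes with $\phi$. Writing $\phi(x_0)=\sigma^{r}(x_0)$ for a suitable $r$, commutativity gives $\phi(\sigma^{j}(x_0))=\sigma^{j+r}(x_0)$, so $\phi$ acts on the $D$-element orbit as the rotation $\sigma^{r}$. Such a rotation decomposes the orbit into $\gcd(D,r)$ cycles, each of length $d_0=D/\gcd(D,r)$, and these $\phi$-orbits correspond bijectively to the irreducible factors of $g_c-\alpha h_c$ over $\F_{q^n}$. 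Hence $g_c-\alpha h_c$ splits into $D/d_0$ irreducible factors of common degree $d_0\mid D$, which is exactly the $2$-EDF condition with $d=D$.

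The technical heart is the middle paragraph: showing that for $\alpha\notin\F_{q^2}$ the fiber $Q_c^{-1}(\alpha)$ is a single free $\an{\sigma}$-orbit of size $D$. This rests on the explicit (shared) fixed points of the powers of $\sigma$ and on excluding $\infty$ together with the poles of $Q_c$ — all of which lie in $\F_{q^2}$ by Theorem~\ref{main}(3) — from the fiber. Once this is established, the equal-degree conclusion is forced purely by the commutation of $\sigma$ with Frobenius, so, pleasantly, no distinction on the parity of $n$ is required.
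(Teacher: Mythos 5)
Your proof is correct and follows essentially the same route as the paper's: both exploit the invariance $Q_c\circ\sigma=Q_c$ under an order-$D$ M\"obius transformation defined over $\F_q$ (the paper uses the inverse map $\beta\mapsto\frac{c-\beta}{\beta}$) to show that the root set of $g_c-\alpha h_c$ is a single free orbit of $D$ distinct elements, whence separability and the equal-degree splitting. The only differences are in minor sub-steps: the paper rules out fixed points by noting that a fixed point of a nontrivial element of $\PGL_2(\F_q)$ satisfies a degree-$\le 2$ equation over $\F_q$ (rather than computing $-\theta,-\theta^q$ and their $Q_c$-values explicitly), and it deduces equal degrees from the roots being $\F_q$-rational M\"obius images of one another, where you instead use the Frobenius-rotation argument.
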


\begin{proof}
For any $\beta \in S=\overline{\F}_q\setminus \F_{q}$ and $[A]\in \PGL_2(\F_q)$ with $A=\left(\begin{matrix}
a&b\\
c&d\end{matrix}\right)$, we set 
$$[A]\circ \beta=\frac{d\beta-c}{a-b\beta}.$$

By Lemma 2.5 of \cite{ST}, we know that $[A]\circ \beta$ is well defined and $[A]\circ ([B]\circ \beta)=[AB]\circ \beta$ for any $[A], [B] \in \PGL_2(\F_q)$.  Hence 
$$[A^i]\circ \beta:=\underbrace{[A]\circ \cdots [A]\circ}_{i\,\mathrm{times}} \beta.$$
We observe that $[A_c]\circ \beta=\frac{c-\beta}{\beta}$. From Theorem~\ref{main}, $Q_c(x)=Q_c\left(\frac{c}{x+1}\right)$, and so $$Q_c([A_c]\circ \beta)=Q_c\left(\frac{c-\beta}{\beta}\right)=Q_c(\beta),$$ for any $\beta \in S$. Let $\alpha\in \overline{\F}_{q}$ such that $\deg_q(\alpha)=n\ge 3$. Following previous notation, we write $Q_c(x)=\frac{g_c(x)}{h_c(x)}$. Set $F_{\alpha}(x)=g_c(x) -\alpha h_c(x)$ and let $\gamma$ be any root of $F_{\alpha}(x)$, that is, $g_c(\gamma)=\alpha h_c(\gamma)$. Set $d_0=\deg_{q^n}(\gamma)$. Then $F_{\alpha}$ has an irreducible polynomial of degree $d_0$ over $\F_{q^n}$.  Next we show that any other irreducible factor of $F_{\alpha}$ is of the same degree.  Since $h_c$ and $g_c$ are relatively prime and $\alpha \ne 0$, it follows that $Q_c(\gamma)=\alpha$. Since $\deg_q(\alpha)\ge 3$, it follows that $\deg_q(\gamma)\ge 3$ and so $\gamma \in S$. If we set $\gamma_i=[A_c]^{i}\circ \gamma$ for $1\le i\le D$, we have $Q_c(\gamma_i)=\alpha$ and then each $\gamma_i$ is also a root of $F_{\alpha}$. We claim that such elements are pairwise distinct. In fact, if $\gamma_i=\gamma_j$ for some $1\le j<i\le D$, then $$\gamma=\gamma_D=\gamma_{i-j}=[A_c]^{i-j}\circ \gamma,$$ and this yields a nontrivial polynomial  equality in $\gamma$ of degree at most $2$ with coefficients in $\F_q$, i.e., $\deg_q(\gamma)\le 2$, a contradiction. Since $\deg(F_{\alpha})=D$, it  follows that the elements $\gamma_i$ are the roots of $F_{\alpha}$. It is straightforward to check that $\deg_{q^n}(\gamma_i)=\deg_{q^n}(\gamma)$ and this shows that $d_0$ must be a divisor of $D$ and $F_{\alpha}(x)$ splits into $\frac{D}{d_0}$ irreducible factors, each of degree $d_0$.
\end{proof}

Proposition~\ref{EDF} and Corollary~\ref{cor:main2} immediately give
the following corollary.

\begin{corollary}\label{edf-prime}
Suppose that $r>2$ is any prime divisor of $q+1$, $[A_c]\in \PGL_2(\F_q)$ is an element of order $r$ and $Q_c$ is the canonical rational function associated to $A_c$. If $f$ is any irreducible polynomial of degree $n\ge 3$, then $f^{Q_c}$ is either irreducible of degree $rn$ or splits into $r$ distinct irreducible factors over $\F_{q}$, each of degree $n$, depending on  whether $g_c-\alpha h_c\in \F_{q^n}[x]$ is irreducible or splits into linear factors, respectively, where $\alpha\in \F_{q^n}$ is any root of $f$.
\end{corollary}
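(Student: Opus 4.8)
The plan is to derive Corollary~\ref{edf-prime} as an essentially immediate specialization of Proposition~\ref{EDF} together with Corollary~\ref{cor:main2}, exploiting the fact that when $D=r$ is prime, the only divisors $d_0$ of $r$ are $1$ and $r$, which collapses the general EDF splitting pattern into a strict dichotomy. First I would apply Proposition~\ref{EDF} with $D=r$: since $r>2$ is a prime divisor of $q+1$ and $[A_c]$ has order $r$, the rational function $Q_c$ is $2$-EDF. Thus for the irreducible polynomial $f$ of degree $n\ge 3$ with root $\alpha\in \F_{q^n}$ (so $\deg_q(\alpha)=n\ge 3$), the polynomial $g_c-\alpha h_c\in \F_{q^n}[x]$ splits into $\frac{r}{d_0}$ irreducible factors each of degree $d_0$, for some divisor $d_0$ of $r$.

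Next I would invoke the primality of $r$ to enumerate the cases: $d_0\in\{1,r\}$. If $d_0=r$, then $g_c-\alpha h_c$ is itself irreducible over $\F_{q^n}$, and case (i) of Corollary~\ref{cor:main2} gives that $f^{Q_c}$ is irreducible of degree $rn$ over $\F_q$. If $d_0=1$, then $g_c-\alpha h_c$ splits into $r$ linear factors over $\F_{q^n}$, and case (ii) of Corollary~\ref{cor:main2} (with $d=r$, $d_0=1$) gives that $f^{Q_c}$ factors as $\frac{r}{1}=r$ irreducible polynomials, each of degree $1\cdot n=n$, over $\F_q$.

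The one point requiring slightly more than a bare citation is the assertion that the $r$ factors are \emph{distinct} (equivalently, that $f^{Q_c}$ is separable in the split case). I would obtain this from the separability clause of Proposition~\ref{EDF}: since $\deg_q(\alpha)=n\ge 3$, that proposition guarantees $g_c-\alpha h_c$ is separable over $\F_{q^n}$, so its $r$ linear factors are pairwise distinct roots $\gamma_1,\dots,\gamma_r$ (indeed these are exactly the distinct elements $[A_c]^i\circ\gamma$ exhibited in the proof of Proposition~\ref{EDF}). Passing through the spin construction of Proposition~\ref{spin:3}, distinct roots yield distinct irreducible factors of $f^{Q_c}$ over $\F_q$, which justifies the word ``distinct'' in the statement.

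I do not anticipate a genuine obstacle here, since the corollary is a direct combination of two already-established results; the only mild subtlety is bookkeeping—making sure the hypothesis $n\ge 3$ is what licenses the use of Proposition~\ref{EDF} (both its EDF conclusion and its separability conclusion require $\deg_q(\alpha)\ge 3$) and that the dichotomy genuinely exhausts all cases precisely because $r$ is prime. Were $D$ composite, intermediate divisors $d_0$ would produce factorizations into fewer than $D$ factors of higher degree, and the clean ``irreducible or totally split'' alternative would fail; this is exactly why the corollary is stated for prime $r$.
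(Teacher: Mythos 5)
Your proposal is correct and follows exactly the paper's route: the paper derives this corollary as an immediate consequence of Proposition~\ref{EDF} and Corollary~\ref{cor:main2}, with primality of $r$ forcing $d_0\in\{1,r\}$, precisely as you argue. Your additional care over the word ``distinct'' (via the separability clause of Proposition~\ref{EDF} and the spin factorization of Proposition~\ref{spin:3}) is a welcome bit of bookkeeping that the paper leaves implicit.
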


We next show that $Q$-transform preserves relatively prime relations between polynomials.

\begin{lemma}\label{coprime}
Let $Q=\frac{g}{h}\in \F_q$ be any rational function, where $g$ and $h$ are relatively prime polynomials in $\F_q[x]$. If $f$ and $f_0$ are relatively prime polynomials over $\F_q$, then so are $f^Q$ and $f_0^Q$.\end{lemma}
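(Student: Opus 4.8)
The plan is to prove the contrapositive: show that if $f^Q$ and $f_0^Q$ share a common irreducible factor over $\F_q$, then $f$ and $f_0$ share a common irreducible factor. The natural tool is Proposition~\ref{spin:3}, which describes the factorization of a $Q$-transform in terms of spins of the irreducible factors of $g-\alpha h$ over $\F_{q^d}$, where $\alpha$ is a root of the polynomial being transformed. The key observation is that the irreducible factors of $f^Q$ are governed entirely by the roots of $f$ via the polynomials $g-\alpha h$, so a common factor of $f^Q$ and $f_0^Q$ should force a coincidence among the roots of $f$ and $f_0$.

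First I would take a common irreducible factor $P$ of $f^Q$ and $f_0^Q$ and let $\gamma\in\overline{\F}_q$ be a root of $P$. Since $P\mid f^Q=h^n f(g/h)$ and $\gamma$ is a root of $P$, and assuming $h(\gamma)\ne 0$ (the case $h(\gamma)=0$ must be checked separately using that $g,h$ are coprime, so they have no common root), we get $f(Q(\gamma))=0$, i.e.\ $\alpha:=Q(\gamma)=g(\gamma)/h(\gamma)$ is a root of $f$. By the identical argument applied to $f_0^Q$, the same element $\alpha=Q(\gamma)$ is also a root of $f_0$. Hence $\alpha$ is a common root of $f$ and $f_0$, and since both are polynomials over $\F_q$, the minimal polynomial of $\alpha$ over $\F_q$ divides both $f$ and $f_0$, giving a nontrivial common factor and contradicting $\gcd(f,f_0)=1$.

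The main obstacle, and the one point requiring care, will be the handling of roots $\gamma$ at which $h$ vanishes: at such points $Q(\gamma)$ is not defined as an element of $\overline{\F}_q$, so the clean identity $f(Q(\gamma))=0$ breaks down. I would dispose of this by noting that, since $g$ and $h$ are relatively prime, they have no common root in $\overline{\F}_q$; so if $h(\gamma)=0$ then $g(\gamma)\ne 0$, and evaluating $f^Q=h^n f(g/h)=\sum_i a_i g^i h^{n-i}$ (where $f=\sum_i a_i x^i$ is monic of degree $n$) at $\gamma$ yields $a_n g(\gamma)^n=g(\gamma)^n\ne 0$, so $\gamma$ is not a root of $f^Q$ at all. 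Thus every root $\gamma$ of the common factor $P$ automatically satisfies $h(\gamma)\ne 0$, and the argument of the previous paragraph applies without exception. An equivalent and perhaps cleaner route is to invoke Proposition~\ref{spin:3} directly: each irreducible factor of $f^Q$ comes from an irreducible factor of $g-\alpha h$ over $\F_{q^n}$ for a root $\alpha$ of $f$, so a shared factor of $f^Q$ and $f_0^Q$ forces $g-\alpha h$ and $g-\alpha_0 h$ to share a root for some roots $\alpha$ of $f$ and $\alpha_0$ of $f_0$; subtracting gives $(\alpha-\alpha_0)h(\gamma)=0$, and since $g,h$ are coprime one checks $h(\gamma)\ne 0$ at such a shared root, forcing $\alpha=\alpha_0$ and hence a common root of $f$ and $f_0$.

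Either way the conclusion is the same, so I would present the first, elementary argument as the main line since it avoids appealing to the full machinery of spins and keeps the proof self-contained.
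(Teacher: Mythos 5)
Your proof is correct, and its main line takes a genuinely more elementary route than the paper's. The paper argues by contradiction through Proposition~\ref{spin:3}: a common factor of $f^Q$ and $f_0^Q$ produces roots $\alpha$ of $f$ and $\beta$ of $f_0$ such that $g-\alpha h$ and $g-\beta h$ are not relatively prime, and then a short gcd computation (using $\gcd(g,h)=1$ and $\alpha\ne\beta$) yields the contradiction --- this is precisely the ``equivalent route'' you sketch in your third paragraph. Your primary argument instead works directly with a root $\gamma$ of a putative common irreducible factor: you first rule out $h(\gamma)=0$ by evaluating the identity $f^Q=\sum_i a_i g^i h^{n-i}$ at $\gamma$, which gives $a_n g(\gamma)^n\ne 0$ because $\gcd(g,h)=1$, and then conclude that $Q(\gamma)$ is a common root of $f$ and $f_0$, whose minimal polynomial over $\F_q$ is a nontrivial common factor. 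What your version buys is self-containedness: it uses nothing about spins, only the defining identity of the $Q$-transform and the coprimality of $g$ and $h$, and it makes the pole case $h(\gamma)=0$ explicit, a point that in the paper is absorbed into the statement of Proposition~\ref{spin:3}. What the paper's version buys is brevity in context, since Proposition~\ref{spin:3} is already on the table and reused elsewhere. Your extra care also sidesteps a small blemish in the paper's computation: there, after fixing $\beta\ne 0$, the displayed step $\gcd(g-\alpha h,(1-\alpha^{-1}\beta)g)$ tacitly invokes $\alpha^{-1}$, which requires $\alpha\ne 0$ (the intended combination is $(g-\alpha h)-\alpha\beta^{-1}(g-\beta h)=(1-\alpha\beta^{-1})g$); your evaluation argument never needs such a case split.
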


\begin{proof}Suppose, by contradiction, that there exist relatively prime polynomials $f$ and $f_0$ of degree $n$ such that $f^Q$ and $f_0^Q$ are not relatively prime.  
By Proposition~\ref{spin:3}, it follows that $f$ (resp. $f_0$) has a root $\alpha$ (resp. $\beta$) such that $g-\alpha h$ and $g-\beta h$ are not relatively prime. Since $f$ and $f_0$ are relatively prime, $\alpha\ne \beta$ and, in particular, at least one of these elements is nonzero. Suppose that $\beta\ne 0$. In this case, $\gcd(g-\alpha h, g-\beta h)=\gcd(g-\alpha h, (1-\alpha^{-1}\beta) g)=1$, since $g$ and $h$ are relatively prime and $\alpha\ne \beta$. Thus we have a contradiction.\end{proof}

We fix a prime factor $D>2$ of $q+1$ and $A_c=\left(\begin{matrix}
0&1\\
c&1\end{matrix}\right)$ such that $\ord([A_c])=D$ and $Q_c$ is the degree-$D$ canonical rational function associated to $A_c$. If $f\in \F_q[x]$ is an irreducible polynomial of degree $n\ge 3$, from Corollary~\ref{edf-prime}, the polynomial $f^{Q_c}$ is either irreducible or splits into $D$ irreducible factors, each of degree $n$.  Given such a polynomial $f$, we consider the following procedure:
\begin{enumerate}[]
\item {\bf Step 1} If $f^{Q_c}$ is irreducible, stop. The polynomial $f^{Q_c}$ has degree $nD$.
\item {\bf Step 2} If $f^{Q_c}$ is reducible, each of its irreducible factors has degree $n$, by Corollary~\ref{edf-prime}.
Pick $g$, an irreducible factor of $f^{Q_c}$, and apply {\bf Step 1} again.
\end{enumerate}

We want to guarantee that this process eventually stops, that is, after a finite number of iterations of this procedure, we obtain an irreducible polynomial of degree $nD$ of the form $g^{Q_c}$. In this context, the following definitions are useful.

\begin{definition}
\begin{enumerate}
\item For $f\in \F_q[x]$, set $Q_c^{(0)}\circ f=f$ and, for $i\ge 1$, $Q_c^{(i)}\circ f=(Q_c^{(i-1)}\circ f)^{Q_c}$. 

\item For each positive integer $n\ge 3$, let $\G_n(Q_c)$ be the directed graph with nodes labeled with the monic irreducible polynomials of degree $n$ over $\F_q$ such that the directed edge $f\to g$ is in $\G_n(Q_c)$ if and only if $f$ divides $g^{Q_c}$. 

\item An element $f\in \G_n(Q_c)$ is $Q_c$-periodic if $f$ divides $Q_c^{(k)}\circ f$ for some positive integer $k$, i.e., the node associated to $f$ belongs to a cyclic subgraph of $\G_n(Q_c)$. 
\end{enumerate}
\end{definition}

From Lemma~\ref{coprime}, the following result is straightforward.
\begin{lemma}\label{lem:graph-one}
For any positive integer $n$ and any monic irreducible polynomial $f\in \F_q[x]$ of degree $n$, there exsts at most one polynomial $g\in \F_q[x]$ such that the directed edge $f\to g$ is in $\G_n(Q_c)$. 
\end{lemma}

\begin{remark}\label{remark:iterates-fail}We observe that by Lemma~\ref{lem:graph-one}, the graph is formed by components with
a cycle and trees hanging from the cycle nodes; arrows go towards the
cycle. Moreover, after $i$ iterations of {\bf Step 1} and {\bf Step 2}, if no irreducible of degree $nD$ is obtained, we arrive to a sequence $\{g_j\}_{0\le j\le i}$ of irreducible polynomials of degree $n$ such that $g_0=f$ and $g_j$ divides $g_{j-1}^{Q_c}$ for $1\le j\le i$. It is straightforward to check that, if we arrive at one irreducible polynomial $g_j$ that is not $Q_c$-periodic, our procedure eventually stops. This follows from the fact that the graph $\G_n(Q_c)$ is finite.
\end{remark}

The following theorem provides some general results on the structure of $Q_c$-periodic elements. 
\begin{theorem}\label{thm:periodic-points}
For a monic irreducible polynomial $f\in \F_q[x]$ of degree $n\ge 1$, the following hold:

\begin{enumerate}[(i)]
\item if $f^{Q_c}$ is reducible, at most one of its irreducible factors is $Q_c$-periodic;
\item if $f$ is not $Q_c$-periodic and $f^{Q_c}$ is reducible, none of its irreducible factors is $Q_c$-periodic.
\end{enumerate}
\end{theorem}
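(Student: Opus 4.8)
The plan is to recast both claims as statements about the functional graph $\G_n(Q_c)$. For a monic irreducible $g$, Lemma~\ref{coprime} together with Lemma~\ref{lem:graph-one} shows that at most one monic irreducible $h$ satisfies $g\mid h^{Q_c}$; when it exists I write $\sigma(g)=h$, so that $g\to\sigma(g)$ is the unique out-edge of $g$. Hence every node of $\G_n(Q_c)$ has out-degree at most one, and each component is, as recorded in Remark~\ref{remark:iterates-fail}, a single directed cycle with trees feeding into it and all arrows pointing toward the cycle. Two observations turn the theorem into graph combinatorics: (a) the irreducible factors of $f^{Q_c}$ are exactly the in-neighbours of $f$, i.e. the set $\sigma^{-1}(f)=\{p:\sigma(p)=f\}$, because $p\mid f^{Q_c}$ says precisely that $f$ is the unique successor of $p$; and (b) $f$ is $Q_c$-periodic exactly when $f$ lies on the cycle of its component, i.e. $\sigma^{k}(f)=f$ for some $k\ge 1$.

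Granting (a) and (b), I would argue as follows. For (ii), assume $f$ is not periodic and that some factor $p\in\sigma^{-1}(f)$ is periodic. Then $p$ sits on a cycle, its unique out-edge is $p\to\sigma(p)=f$, and the successor of a cycle node again lies on that cycle; thus $f$ lies on a cycle, contradicting that $f$ is not periodic. Hence no factor of $f^{Q_c}$ is periodic. For (i), suppose $f^{Q_c}$ is reducible and that $p_1\neq p_2$ are both periodic factors. Since a periodic predecessor forces $f$ itself onto a cycle (the contrapositive of the step just made), $f$ is periodic; let $C$ be the unique cycle through $f$ (unique since a periodic node lies on exactly one cycle of the functional graph). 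Each $p_i$ lies on a cycle with $\sigma(p_i)=f$, so that cycle contains $f$, hence equals $C$, and $p_i$ is the predecessor of $f$ along $C$. A simple directed cycle has a single predecessor of $f$, so $p_1=p_2$, a contradiction; therefore at most one factor is periodic.

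The real work lies in justifying (b), namely that the divisibility definition $f\mid Q_c^{(k)}\circ f$ agrees with $\sigma^{k}(f)=f$. The plan is to track, for $m=\deg f$, the set $N_j$ of degree-$m$ monic irreducible factors of $Q_c^{(j)}\circ f$, starting from $N_0=\{f\}$. Since the $Q_c$-transform is multiplicative (Proposition~\ref{prop:Q-transform}(b)), so is each iterate $Q_c^{(j)}\circ(\,\cdot\,)$; and since $g_c,h_c$ are coprime (Theorem~\ref{main}(1)), any root $\gamma$ of an irreducible factor of $r^{Q_c}$ satisfies $Q_c(\gamma)=\beta$ for a root $\beta$ of $r$ (Proposition~\ref{spin:3}), whence $\F_q(\beta)\subseteq\F_q(\gamma)$ and $\deg_q(\gamma)$ is a multiple of $\deg r$. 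Thus degrees never drop under the transform, a degree-$m$ factor of $Q_c^{(j)}\circ f$ can only come from a degree-$m$ factor $r$ of $Q_c^{(j-1)}\circ f$ and only as a predecessor of $r$; this yields the recursion $N_j=\sigma^{-1}(N_{j-1})$, so $N_j=\{p:\sigma^{j}(p)=f\}$ and $f\in N_k\iff\sigma^{k}(f)=f$. The converse implication is easy: a cycle $f\to\sigma(f)\to\cdots\to f$ gives $f\mid\sigma(f)^{Q_c}$, then $\sigma(f)^{Q_c}\mid Q_c^{(2)}\circ\sigma^{2}(f)$, and so on, closing up to $f\mid Q_c^{(k)}\circ f$. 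The delicate point is the existence of $\sigma$ at low-degree nodes, where a root of $r$ may be a pole of $Q_c$; by Theorem~\ref{main}(3) this can occur only when $\deg r\le 2$, so in the principal case $n\ge 3$ Corollary~\ref{edf-prime} keeps $f^{Q_c}$ splitting into $D$ factors all of degree $n$ and the entire argument stays inside $\G_n(Q_c)$.
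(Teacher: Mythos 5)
Your proof is correct and follows essentially the same route as the paper: both arguments work in the functional graph $\G_n(Q_c)$, use Lemma~\ref{coprime} (via Lemma~\ref{lem:graph-one}) to get out-degree at most one, and conclude from the cycle structure that a periodic factor would force $f$ onto a cycle and that $f$ has a unique cycle-predecessor. The only difference is one of detail: your third paragraph proves the equivalence between the divisibility definition of $Q_c$-periodicity and membership in a cycle of $\G_n(Q_c)$, which the paper simply builds into its definition (the ``i.e.''), and your case analysis spells out what the paper's proof dismisses with ``we can easily see.''
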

\begin{proof}
\begin{enumerate}
\item Suppose that $f^{Q_c}$ is reducible and has two irreducible factors $F_0, F_1$. In particular, the directed edges $F_0\to f$ and $F_1\to f$ are in $\G_n(Q_c)$. From Lemma~\ref{coprime}, for any $g\in \I_n$, there exists at most one element $h\in \I_n$ such that the directed edge $g\to h$ belongs to $\G_n(Q_c)$. From this fact, we can easily see that $F_0$ and $F_1$ cannot be both $Q_c$-periodic.
\item Suppose that $f^{Q_c}$ is reducible and let $g$ be any of its irreducible factors, so $g\rightarrow f$ is an edge of $\G_n(Q_c)$. If $g$ is $Q_c$-periodic, using the fact that there exists at most one element $h\in \I_n$ such that the directed edge $h\to f$ belongs to $\G_n(Q_c)$, we conclude that $f$ is $Q_c$-periodic. This contradicts our hypothesis. 
\end{enumerate}
\end{proof}

From the previous theorem and Remark~\ref{remark:iterates-fail}, we have that if $f$ is any irreducible polynomial of degree $n\ge 3$ such that $f^{Q_c}$ is reducible and we pick $g_0$, $h_0$ two irreducible factors of $f^{Q_c}$, then after a finite number of iterations of {\bf Step 1} and {\bf Step 2} with the polynomials $g_0$ and $h_0$, at least one of them yields an irreducible polynomial of degree $nD$ of the form $G^{Q_c}$ with $G\in \F_q[x]$. In other words, at most one irreducible factor of $f^{Q_c}$ can go to an endless loop when proceeding to {\bf Step 1} and {\bf Step 2}. As follows, we provide upper bounds on the number of iterations of these steps in order to arrive at an irreducible of degree $nD$.

\subsubsection{On the number of iterations}
Recall that $A_c=\left(\begin{matrix}
0&1\\
c&1\end{matrix}\right)$ and $D=\ord([A_c])$. Set $S:=\overline{\F}_q\setminus \F_{q^2}$ and let $P_c:S\to S$ be the map defined by $$P_c(\alpha):=\frac{\alpha+\theta^q}{\alpha+\theta},$$
where $\theta$ is an eigenvalue of $A_c$. In the following lemma, we show some basic properties of the function $P_c$ and how this function interacts with $Q_c$. 

\begin{lemma}\label{lem:aux-dynamic}
The following hold:
\begin{enumerate}[(i)]
\item $P_c$ is well defined and is a permutation of the set $S$.
\item For any $\alpha\in S$,
$$Q_c(\alpha)=P_c^{-1}\circ \Gamma_D\circ P_c(\alpha),$$
where $\Gamma_D: \overline{\F}_q\to \overline{\F}_q$ is given by $\Gamma_D(\alpha)=\alpha^D$. 
\item If $\alpha_1, \ldots, \alpha_m$ are in $S$ and $Q_c(\alpha_i)=\alpha_{i-1}$ for $2\le i\le m$, then 
\begin{equation}\label{conjugation}P_c(\alpha_1)=P_c(\alpha_m)^{D^{m-1}}.\end{equation}
\end{enumerate}
\end{lemma}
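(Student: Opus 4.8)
The plan is to handle the three parts in order, with part (ii) serving as the algebraic core and parts (i) and (iii) as its natural companions.

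For part (i), I would regard $P_c$ as the M\"obius transformation attached to the matrix $\left(\begin{smallmatrix}1&\theta^q\\1&\theta\end{smallmatrix}\right)$, which lies in $\GL_2(\F_{q^2})$ since its determinant $\theta-\theta^q$ is nonzero (because $\theta\notin\F_q$). Well-definedness on $S$ is immediate: the only pole of $P_c$ is $\alpha=-\theta\in\F_{q^2}$, which does not lie in $S$. To see that $P_c$ permutes $S$, I would invoke the standard fact that a M\"obius transformation with coefficients in $\F_{q^2}$ acts as a bijection of $\mathbb P^1(\overline{\F}_q)$ that restricts to a bijection of $\mathbb P^1(\F_{q^2})$, hence also of its complement. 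Since $\infty\in\mathbb P^1(\F_{q^2})$, that complement is exactly $S=\overline{\F}_q\setminus\F_{q^2}$, so $P_c|_S$ is a permutation; its inverse is the M\"obius transformation of the inverse matrix, again with entries in $\F_{q^2}$.

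The heart of the lemma is part (ii), which I would prove by direct computation. First I invert $P_c$: solving $\beta=\frac{\alpha+\theta^q}{\alpha+\theta}$ for $\alpha$ gives $P_c^{-1}(\beta)=\frac{\beta\theta-\theta^q}{1-\beta}$. Writing $u=\alpha+\theta^q$ and $v=\alpha+\theta$, so that $P_c(\alpha)=u/v$ and $\Gamma_D(P_c(\alpha))=u^D/v^D$, substitution yields
$$P_c^{-1}(u^D/v^D)=\frac{\theta u^D-\theta^q v^D}{v^D-u^D}=\frac{\theta(\alpha+\theta^q)^D-\theta^q(\alpha+\theta)^D}{(\alpha+\theta)^D-(\alpha+\theta^q)^D},$$
which is precisely $Q_c(\alpha)$ as given in Definition~\ref{def:Q_c}. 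For $\alpha\in S$ both sides are finite, since the poles of $Q_c$ lie in $\F_{q^2}$ by Theorem~\ref{main}, so the identity holds as an equality in $\overline{\F}_q$.

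For part (iii), I would first rewrite part (ii) in the form $P_c(Q_c(\alpha))=P_c(\alpha)^D$, valid for every $\alpha\in S$ with $Q_c(\alpha)\in S$; this is obtained by applying $P_c$ to both sides of the identity in (ii). Under the hypotheses of (iii) we have $Q_c(\alpha_i)=\alpha_{i-1}\in S$ for $2\le i\le m$, so the rewritten identity gives $P_c(\alpha_{i-1})=P_c(\alpha_i)^D$ for each such $i$. Iterating this relation down the chain $\alpha_m,\alpha_{m-1},\dots,\alpha_1$ multiplies the exponent by $D$ at each of the $m-1$ steps, yielding $P_c(\alpha_1)=P_c(\alpha_2)^{D}=\cdots=P_c(\alpha_m)^{D^{m-1}}$, which is~\eqref{conjugation}. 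The only genuinely delicate point in the whole argument is the bijectivity claim in (i): one must verify that $P_c$ maps $S$ \emph{into} $S$ rather than merely being defined there, and the cleanest justification is the projective one above, which exploits that $P_c$ preserves $\mathbb P^1(\F_{q^2})$ by virtue of having coefficients in $\F_{q^2}$. Parts (ii) and (iii) are then routine, the former a single substitution and the latter a one-line induction on $m$.
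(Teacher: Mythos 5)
Your proof is correct and follows essentially the same route as the paper's: explicit inversion of $P_c$, the identity in (ii) by substituting $u=\alpha+\theta^q$, $v=\alpha+\theta$ into $P_c^{-1}$, and (iii) by iterating $P_c(Q_c(\alpha))=P_c(\alpha)^D$ down the chain. The only variation is in (i), where you justify that $P_c$ maps $S$ bijectively onto itself via the action of $\PGL_2(\F_{q^2})$ on $\mathbb{P}^1(\overline{\F}_q)$, while the paper checks this directly with the Frobenius (if $P_c(\alpha)\in\F_{q^2}$ then, since $\theta^{q^2}=\theta$, one gets $\alpha^{q^2}=\alpha$, a contradiction); both arguments are valid, and your write-up of (ii) in fact supplies the computation the paper only asserts.
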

\begin{proof}
\begin{enumerate}[(i)]
\item Since for $a\in S$, $\alpha\ne -\theta$, we have that $P_c(\alpha)$ is well defined. We observe that, for $\alpha\in S$,  $P_c(\alpha)\in S$. For this, if $P_c(\alpha)\in \F_{q^2}$, then $P_c(\alpha)^{q^2}=P_c(\alpha)$ and, since $\theta^{q^2}=\theta$, we have that $\alpha^{q^2}=\alpha$, that is, $\alpha\in \F_{q^2}$, a contradiction. It is straightforward to check that the map $T_c:S\to S$ given by $T_c(\alpha)=\frac{\theta\alpha-\theta^q}{1-\alpha}$ is the inverse of $P_c$.
\item From Theorem~\ref{main}, $Q_c$ is well defined in $S$ and so the identity $Q_c(\alpha)=P_c^{-1}\circ \Gamma_D\circ P_c(\alpha)$ follows from the definition of $Q_c$.
\item Since each $\alpha_i$ is in $S$, the equality $Q_c(\alpha_i)=\alpha_{i-1}$ yields $P_c(\alpha_i)^{D}=P_c(Q_c(\alpha_i))=P_c(\alpha_{i-1})$ and we easily obtain Eq.~\eqref{conjugation}.
\end{enumerate}
\end{proof}
 Though our main result here is restricted to the case where $D$ is prime, Lemma~\ref{lem:aux-dynamic} holds for arbitrary $D$. 
 
 For a positive integer $n$, we define $e(n)=\mathrm{lcm}(n, 2)$, i.e., $e(n)=2n$ if $n$ is odd and $e(n)=n$ if $n$ is even. Clearly, if $\alpha\in \F_{q^n}\setminus{\F_{q^2}}$, then $P_c(\alpha)\in \F_{q^{e(n)}}$. Moreover, $D$ also divides $q^{e(n)}-1$ because $D$ divides $q+1$. We obtain the following result.

\begin{theorem}
Let $f\in \F_q[x]$ be an irreducible polynomial of degree $n\ge 3$ and let $\alpha\in \F_{q^n}$ be any of its roots. The following hold:

\begin{enumerate}[(i)]
\item $f$ is $Q_c$-periodic if and only if $\ord(P_c(\alpha))$ is relatively prime with $D$;

\item if $f^{Q_c}$ is reducible, then $\ord(P_c(\alpha))$ divides $\frac{q^{e(n)}-1}{D}$;

\item if $\ord(P_c(\alpha))$ is divisible by $D$ and $\gamma\in \overline{\F}_q$ is any root of $f^{Q_c}$, then $\ord(P_c(\gamma))=D\cdot \ord(P_c(\alpha))$.
\end{enumerate}
In particular, if $f^{Q_c}$ is reducible over $\F_q$ and $g_0, h_0\in \F_q[x]$ are two of its irreducible factors, then after at most
$$\nu_D(q^{e(n)}-1)-1$$
iterations of {\bf Step 1} and {\bf Step 2} with inputs $g_0$ and $h_0$, we obtain an irreducible polynomial of degree $nD$ of the form $G^{Q_c}$.
\end{theorem}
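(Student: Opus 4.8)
The plan is to transport every assertion through the bijection $P_c$ of Lemma~\ref{lem:aux-dynamic}, which conjugates the $Q_c$-action to the power map $\Gamma_D\colon\beta\mapsto\beta^D$. First I would record how the $q$-Frobenius interacts with $P_c$: a direct computation using $\theta^{q^2}=\theta$ gives $P_c(\alpha^q)=P_c(\alpha)^{-q}$, hence $P_c(\alpha^{q^j})=P_c(\alpha)^{(-q)^j}$ for all $j$. Writing $\beta=P_c(\alpha)$ and $e=\ord(\beta)$, this identity yields the fundamental dictionary: $\alpha\in\F_{q^m}$ iff $(-q)^m\equiv 1\pmod e$, so $\deg_q(\alpha)=\ord_e(-q)$ and in particular $n=\ord_e(-q)$; thus the conjugates of $\alpha$ correspond exactly to the subgroup $\langle -q\rangle$ of order $n$ in $(\Z/e\Z)^*$. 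Combining this with $P_c(Q_c(\alpha))=\beta^D$, the polynomial $f$ is $Q_c$-periodic precisely when $D^k\equiv(-q)^m\pmod e$ for some $k\ge 1$ and some $m$.

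With this dictionary, part (i) becomes a statement in $(\Z/e\Z)^*$: the element $(-q)^m$ always lies in the unit subgroup $\langle -q\rangle$, whereas (as $D$ is prime) $D^k$ is a unit for all $k\ge 1$ exactly when $D\nmid e$, in which case $\{D^k:k\ge1\}=\langle D\rangle\ni 1$ and the congruence is solvable (take $k=\ord_e(D)$, $m=0$); if $D\mid e$ then $D^k$ is a non-unit for every $k\ge1$ and no solution exists. Hence $f$ is $Q_c$-periodic iff $\gcd(D,e)=1$. For part (iii), if $\gamma$ is a root of $f^{Q_c}$ then $Q_c(\gamma)$ is a conjugate of $\alpha$, so $\eta:=P_c(\gamma)$ satisfies $\eta^D=\beta^{(-q)^m}$, an element of order $e$; therefore $e\mid\ord(\eta)=:e'$ and $e'/\gcd(e',D)=e$, and the hypothesis $D\mid e$ forces $\gcd(e',D)=D$, so $e'=De$.

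Part (ii) admits a clean multiplicative proof. For prime $D$, reducibility of $f^{Q_c}$ means (Corollary~\ref{edf-prime}) that $g_c-\alpha h_c$ splits into linear factors over $\F_{q^n}$, so all $D$ roots $\gamma_i=[A_c]^i\circ\gamma$ lie in $\F_{q^n}\subseteq\F_{q^{e(n)}}$; applying $P_c$, the $\eta_i=P_c(\gamma_i)$ are $D$ distinct $D$-th roots of $\beta$ inside the cyclic group $\F_{q^{e(n)}}^*$ of order $q^{e(n)}-1$. A cyclic group contains all $D$-th roots of $\beta$ iff $\beta$ is a $D$-th power there, i.e. iff $\beta^{(q^{e(n)}-1)/D}=1$, i.e. iff $De\mid q^{e(n)}-1$, which is exactly $e\mid\frac{q^{e(n)}-1}{D}$.

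Finally I would assemble the bound. Given $f^{Q_c}$ reducible with factors $g_0,h_0$, Theorem~\ref{thm:periodic-points}(i) guarantees at most one is $Q_c$-periodic, so I follow the non-periodic one, say $g_0$; by part (i) its root has $D\mid\ord(P_c(\cdot))$, so $\nu_D$ of that order is $\ge 1$. Iterating \textbf{Step 1} and \textbf{Step 2} yields a chain $g_0=G_0,G_1,\dots$ of degree-$n$ irreducibles (Corollary~\ref{edf-prime}), each non-periodic by Theorem~\ref{thm:periodic-points}(ii); part (iii) then applies at every stage and multiplies $\ord(P_c(\cdot))$ by $D$, so $\nu_D$ of this order increases by exactly one per reducible step. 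These roots all have degree $n$, so the order always divides $q^{e(n)}-1$, bounding $\nu_D$ by $\nu_D(q^{e(n)}-1)$; and by part (ii) a reducible step requires $\nu_D\le\nu_D(q^{e(n)}-1)-1$. Starting from $\nu_D\ge 1$, at most $\nu_D(q^{e(n)}-1)-1$ reducible steps can occur before the transform is forced to be irreducible, necessarily of degree $nD$ and of the form $G^{Q_c}$. The hard part will be the bookkeeping in this last paragraph: verifying that non-periodicity is preserved along the chain (so part (iii) keeps applying), tracking the $D$-adic valuation exactly, and reconciling the periodic/non-periodic dichotomy so that the head start $\nu_D\ge 1$ produces precisely the stated constant rather than an off-by-one value; the translation of the Frobenius action into $P_c(\alpha^{q^j})=\beta^{(-q)^j}$, together with the $e(n)=\mathrm{lcm}(n,2)$ subtlety for odd $n$, is the other point requiring care.
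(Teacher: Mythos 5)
Your proposal is correct and follows essentially the same route as the paper's own proof: conjugate the $Q_c$-dynamics through $P_c$ to the $D$-th power map, translate (i)--(iii) into statements about the multiplicative order of $P_c(\alpha)$ in $\F_{q^{e(n)}}^*$, and then combine Theorem~\ref{thm:periodic-points} with a $D$-adic valuation count on the chain of roots to get the iteration bound. Your minor variations --- making the Frobenius identity $P_c(\alpha^q)=P_c(\alpha)^{-q}$ explicit, phrasing periodicity as solvability of $D^k\equiv(-q)^m\pmod{e}$, and replacing the paper's proper-divisor analysis in (iii) by the cleaner computation $\ord(\eta^D)=e'/\gcd(e',D)$ with $D\mid e\mid e'$ --- are streamlinings of the same argument rather than a different method.
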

 
\begin{proof}
\begin{enumerate}[(i)]
\item Recall that $f$ is periodic if and only if $f$ divides $Q_c^{(k)}\circ f$ for some $k\ge 1$. The latter holds if and only if $Q_c^{(k)}(\alpha)=\alpha^{q^i}$ for some $i\ge 1$ (or, equivalently, $1\le i\le n$), where $Q_c^{(k)}(\alpha)$ is just the canonical iteration of the function $Q_c$ on $\alpha$, $k$ times. Since $Q_c\in \F_{q}[x]$, $Q_c(\alpha^{q^i})=Q_c(\alpha)^{q^i}$ for $i\in \mathbb N$ and this shows that $Q_c^{(k)}(\alpha)=\alpha^{q^i}$ for some $k, i\ge 1$ if and only if $Q_c^{(k_0)}(\alpha)=\alpha$ for some $k_0\ge 1$. 
From Eq.~\eqref{conjugation}, the last equality is equivalent to
$$P_c(\alpha)=P_c(\alpha)^{D^{k_0}},$$
hence $P_c(\alpha)^{D^{k_0}-1}=1$. If we set $b=\ord(P_c(\alpha))$, then the last equation holds for some $k_0\ge 1$ if and only if $D$ and $b$ are relatively prime.
\item From Corollary~\ref{edf-prime}, if $f^{Q_c}$ is reducible, then $g_c(x)-\alpha h_c(x)$ splits into linear factors over $\F_{q^n}$. In particular, there exists $\gamma\in \F_{q^n}$ such that $g_c(\gamma)=\alpha h_c(\gamma)$. Since $n\ge 3$, $\alpha\ne 0$ and, since $g_c$ and $h_c$ are relatively prime, we have that $g_c(\gamma), h_c(\gamma)\ne 0$. This shows that $Q_c(\gamma)=\alpha$ and, from Eq.~\eqref{conjugation},
$$P_c(\gamma)^{D}=P_c(\alpha),$$ where $P_c(\gamma), P_c(\alpha)\in \F_{q^{e(n)}}$. Since $D$ divides $q^{e(n)}-1$, the last equality implies that $P_c(\alpha)^{\frac{q^{e(n)}-1}{D}}=1$. 

\item Suppose that $\ord(P_c(\alpha))=\ell \cdot D^m$, where $m\ge 1$ and set $\mu=\ord(P_c(\gamma))$. From equation $P_c(\gamma)^{D}=P_c(\alpha)$, we conclude that $\mu$ divides $\ell \cdot D^{m+1}$. If $\mu$ is a proper divisor of $\ell\cdot D^{m+1}$, then there exist a divisor $\ell_0<\ell$ of $\ell$ such that $\mu$ divides $\ell \cdot D^m$ or $\ell_0\cdot D^{m+1}$. We can easily check that both cases contradict to  the fact that  $\ord(P_c(\alpha))=\ell \cdot D^m$. Therefore, $\mu=\ell\cdot D^{m+1}$.
\end{enumerate}
From Theorem~\ref{thm:periodic-points}, either $g_0$ or $h_0$ is not $Q_c$-periodic. Suppose that $g_0$ is not $Q_c$-periodic and, after $i$ iterations of {\bf Step 1} and {\bf Step 2} with input $g_0$, we do not have an irreducible polynomial of degree $nD$. From Remark~\ref{remark:iterates-fail}, this yields a sequence $\{g_j\}_{0\le j\le i}$ of irreducible polynomials of degree $n$ such that $g_j$ divides $g_{j-1}^{Q_c}$ for $1\le j\le i$. The latter yields a sequence of elements $\alpha_j\in \F_{q^n}$ (not uniquely determined) such that $\alpha_j$ is a root of $g_j$ for $1\le j\le i$. In particular, $\alpha_j$ is a root of $g_{j-1}^{Q_c}$. Since $g_0$ is not $Q_c$-periodic and $\alpha_0$ is a root of $g_0$, from item (i), $\ord(P(\alpha_0))$ is divisible by $D$, i.e., $\ord(P_c(\alpha_0))=a\cdot D$ for some integer $a$. A successive application of item (iii) implies that $\ord(P_c(\alpha_i))=a\cdot D^{i+1}$. Since $\alpha_i\in \F_{q^n}$, we have that $P_c(\alpha_i)\in \F_{q^{e(n)}}$ and so $a\cdot D^{i+1}$ divides $q^{e(n)}-1$. In particular, $i+1\le \nu_D(q^{e(n)}-1)$.
\end{proof}

\begin{remark}\label{remark:loop}
Following the proof of the previous theorem, we have that the upper bound $$\nu_D(q^{e(n)}-1)-1$$ on the number
of iterations of {\bf Step 1} and {\bf Step 2} is reached exactly when the initial polynomial $f$ is $Q_c$-periodic. In addition, if $f$ is not $Q_c$-periodic and $f^{Q_c}$ is reducible, we can proceed in the iterations with just one irreducible factor of $f^{Q_c}$, instead of two. In fact, according to Theorem~\ref{thm:periodic-points}, none of the irreducible factors of $f^{Q_c}$ is $Q_c$-periodic and so none of these factors can go to an infinite loop when applying {\bf Step 1} and {\bf Step 2}.\end{remark}

\begin{example}\label{example:type4-recursive}
Suppose that $q=2$ and consider 
$A_1=\left(\begin{matrix}
0&1\\
1&1\end{matrix}\right)$. The element $[A_1]\in \PGL_2(\F_2)$ has order $3$. If $\theta, \theta^2$ are the eigenvalues of $A_1$, $\theta$ is a primitive element of $\F_{2^2}=\F_{4}$ and a direct calculation yields
$$Q_1(x)=\frac{\theta(x+\theta^2)^3-\theta^2(x+\theta)^3}{(x+\theta)^3-(x+\theta^2)^3}=\frac{x^3+x+1}{x^2+x}.$$
For $f(x)=x^4+x+1$, the polynomial $f^{Q_1}$ factors as $3$ irreducible polynomials of degree $4$, one of them being $x^4+x^3+1$. For $f_0(x)=x^4+x^3+1$, we verify that
$$f_1(x)=f_0^{Q_1}(x)=x^{12 }+ x^{11 }+ x^{10 }+ x^{9 }+ x^{8 }+ x^{6 }+ x^{4 }+ x + 1,$$
is an irreducible polynomial over $\F_2$.
\end{example}

\subsection{A random method}
Let $D>2$ be any divisor of $q+1$ and $[A_c]\in \PGL_2(\F_q)$ such that $\ord([A_c])=D$. We recall that, according to Theorem~\ref{main}, any element $f$ of $C_{A_c}(Dn)$ is of the form $g^{Q_c}$ for some polynomial $g$ of degree $n$; from Proposition~\ref{prop:Q-transform}, $g$ must be irreducible. If we suppose $g$ monic, $g^{Q_c}$ may not be monic. For an irreducible polynomial $g\in \F_{q}[x]$ of degree $n$, we define $M(g^{Q_c})$ as the unique monic polynomial of degree $Dn$ equals $g^{Q_c}$ times a constant. In other words, Theorem~\ref{main} entails that for exactly $|C_{A_c}(Dn)|$ elements $g$ of $\I_n$, $M(g^{Q_c})$ is a monic irreducible polynomial of degree $Dn$. If we pick $g\in \I_n$ randomly, what is the probability $p_c(n)$ that $g^{Q_c}$ is irreducible? Of course, we have the following equality:
\begin{equation}\label{probability}p_c(n)=\frac{|C_{A_c}(Dn)|}{|\I_n|}.\end{equation}
Taking estimates to Eq.~\eqref{enumeration-reis}, we can easily obtain the following inequality
$$|C_{A_c}(Dn)|\ge \frac{\varphi(D)}{Dn}\left(q^n-1-\sum_{1\le i\le n/2}(q^{i}+1)\right)\ge \frac{\varphi(D)}{Dn}(q^n-3q^{n/2}).$$
Also, it is well known that $|\I_n|=\frac{1}{n}\sum_{d|n}q^{n/d}\mu(d)$ and so we verify that $|\I_n|\le \frac{q^n}{n}$. In particular, from Eq.~\eqref{probability}, we obtain
$$p_c(n)\ge \frac{\varphi(D)}{D}\cdot \tau(n),$$
where $\tau(n)=1-\frac{3}{q^{n/2}}$. The function $\tau(n)$ goes fast to $1$; for instance, $\tau(n)\ge 0.999$ for $n\ge 24$ and any $q\ge 2$. 

Our procedure is simple: we pick $f\in \I_n$ randomly and check if $f^{Q_c}$ is irreducible. If the answer is yes, we obtain an irreducible polynomial of degree $Dn$ of the form $f^{Q_c}$. If not, we pick another element $f\in \I_n$ and try again.

We note that a random irreducible polynomial of degree $n$ can be obtained in the following way: we pick a random element $\alpha\in \F_{q^n}$ and, with high probability, $\deg_q(\alpha)=n$. Hence the minimal polynomial of $\alpha$ is an irreducible polynomial of degree $n$.

Our procedure is just a trial and error process and, once we succeed, we stop. In particular, our procedure follows the geometric distribution with $p=p_c(n)$; as we know, the expected number of trials is $E(n, c)=\frac{1}{p_c(n)}\le \frac{D}{\varphi(D)\cdot \tau(n)}$. If $\frac{D}{\varphi(D)}$ is small, with high probability, we get success with a small number of steps; for instance, if $D>2$ is any prime number, we have that $E(n, c)\le 2$ for $n\ge 8$ and any $q\ge 2$.

\section{On the transitivity of $Q_c$-transforms}
In this section, we discuss the following problem.

\begin{enumerate}[{\bf P2}]
\item Are the $Q_c$-tranforms transitive (efficient)? That is, given an irreducible polynomial $f$ of degree $n\ge 3$ such that $f^{Q_c}$ is irreducible of degree $nD$, can we guarantee that $Q_c^{(2)}\circ f=(f^{Q_c})^{Q_c}$ is also irreducible?
\end{enumerate}

As we further see,  under a special condition on $n$ and $D$, we prove that if $f^{Q_c}$ is irreducible,  so is $(f^{Q_c})^{Q_c}$.  Hence our construction method is efficient to generate irreducible polynomials of high degree. As we noticed, in Lemma~\ref{lem:aux-dynamic}, the map induced by $Q_c$ on $S=\overline{\F}_q\setminus \F_{q^2}$ is conjugated to the map $x^{D}$ via the permutation $P_c$. Due to this fact, the factorization of $g_c-\alpha h_c$ is related to the factorization of $x^{D}-P_c(\alpha)$. 
We start with the following lemma.

\begin{lemma}\label{binomial}
Let $a\in \overline{\F}_q^{*}$ be an element such that $\ord(a)=e$ and set $k=\ord_e(q)$. For any positive integer $r$ such that $r$ divides $q^k-1$ and any root $b$ of $x^r-a$, we have $\deg_q(b)=\ord_{er}(q)$.
\end{lemma}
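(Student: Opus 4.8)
The plan is to reduce everything to multiplicative orders and then invoke Lemma~\ref{lem:deg-order}, which identifies $\deg_q(b)$ with $\ord_{\ord(b)}(q)$. Thus it suffices to understand $\ord(b)$ and to compare $\ord_{\ord(b)}(q)$ with $\ord_{er}(q)$. First I would record that $b\ne 0$ (as $b^r=a\ne 0$), so that Lemma~\ref{lem:deg-order} applies, and that the hypothesis $r\mid q^k-1$ forces $\gcd(r,q)=1$.

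The first real step is to compute the order $m:=\ord(b)$. Since $b^r=a$ and $\ord(a)=e$, the standard identity $\ord(b^r)=m/\gcd(m,r)$ gives $e=m/\gcd(m,r)$, hence $m=e\cdot d$ where $d:=\gcd(m,r)$. By construction $d\mid r$, and substituting $m=ed$ into $d=\gcd(ed,r)$ together with $\gcd(ed,r)=d\cdot\gcd(e,r/d)$ yields $\gcd(e,r/d)=1$. I would stress here that $d$, and hence $\ord(b)$, genuinely depends on the chosen root, so different roots of $x^r-a$ may have different multiplicative orders; the content of the lemma is that they nonetheless all share the same degree over $\F_q$.

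The decisive step is then to prove $\ord_{ed}(q)=\ord_{er}(q)$ for every admissible $d$. One inclusion is free: since $d\mid r$ we have $ed\mid er$, so $er\mid q^s-1$ implies $ed\mid q^s-1$, whence $\ord_{ed}(q)\le\ord_{er}(q)$. For the reverse, I would take $s=\ord_{ed}(q)$, so $ed\mid q^s-1$; from $e\mid q^s-1$ I get $\ord_e(q)=k\mid s$, and then $q^k\equiv 1\pmod r$ (the hypothesis $r\mid q^k-1$) propagates through $k\mid s$ to give $r\mid q^s-1$. Consequently $\mathrm{lcm}(ed,r)\mid q^s-1$, and the crux of the whole argument is the elementary identity $\mathrm{lcm}(ed,r)=er$, which follows from $\gcd(ed,r)=d\cdot\gcd(e,r/d)=d$. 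This yields $er\mid q^s-1$, hence $\ord_{er}(q)\le\ord_{ed}(q)$, and equality follows. Combining with Lemma~\ref{lem:deg-order} gives $\deg_q(b)=\ord_{\ord(b)}(q)=\ord_{ed}(q)=\ord_{er}(q)$.

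I expect the main obstacle to be pinpointing exactly where the hypothesis $r\mid q^k-1$ is used: it is precisely what converts $k\mid s$ into $r\mid q^s-1$ in the reverse inclusion, and without it the argument only delivers $\mathrm{lcm}(e,r)\mid q^s-1$ rather than the sharper $er\mid q^s-1$. The second delicate point is the $\gcd$/$\mathrm{lcm}$ bookkeeping (the facts $\gcd(e,r/d)=1$ and $\mathrm{lcm}(ed,r)=er$), which is what makes the degree independent of the chosen root even though the order is not.
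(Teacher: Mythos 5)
Your proof is correct, but it takes a genuinely different route from the paper's. You apply Lemma~\ref{lem:deg-order} to $b$ itself: from $\ord(b^r)=\ord(b)/\gcd(\ord(b),r)$ you get $\ord(b)=ed$ with $d=\gcd(\ord(b),r)$, which reduces the lemma to the purely arithmetic identity $\ord_{ed}(q)=\ord_{er}(q)$; one inequality is then free from $ed\mid er$, and the other follows from the hypothesis $r\mid q^k-1$ together with $\gcd(ed,r)=d$, i.e., $\mathrm{lcm}(ed,r)=er$. The paper never computes $\ord(b)$: it compares $t=\deg_q(b)$ with $t_0=\ord_{er}(q)$ directly. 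For $t_0\mid t$ it notes $k\mid t$, hence $r\mid q^t-1$, and then uses $1=b^{q^t-1}=a^{(q^t-1)/r}$ to conclude $er\mid q^t-1$ --- the same essential use of the hypothesis as in your reverse inequality, but obtained by a direct computation rather than gcd/lcm bookkeeping. In the other direction, where your argument is immediate, the paper works harder: it takes a primitive element $\delta$ of $\F_{q^{t_0}}$, writes $a=\delta^{(q^{t_0}-1)u/e}$ with $\gcd(u,e)=1$, exhibits the explicit $r$-th root $\delta_0=\delta^{(q^{t_0}-1)u/(er)}$ of $a$ in $\F_{q^{t_0}}$, and observes that all $r$-th roots of unity also lie in $\F_{q^{t_0}}$, so every root of $x^r-a$ lies there; in particular $b$ does, giving $t\mid t_0$. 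Your approach buys a cleaner conceptual explanation of why the lemma holds --- $\ord(b)=ed$ genuinely depends on the chosen root, but $\ord_{ed}(q)$ does not, which is exactly the root-independence of the degree --- and it avoids the primitive-element construction entirely. The paper's approach buys a slightly stronger byproduct: it shows explicitly that $\F_{q^{t_0}}$ contains all roots of $x^r-a$, i.e., it identifies $\F_{q^{t_0}}$ as a splitting field, rather than only locating the single root $b$.
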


\begin{proof}
From Lemma~\ref{lem:deg-order}, $k=\deg_q(a)$. Let $b\in \overline{\F}_q$ be a root of $x^r-a$, $t=\deg_q(b)$ and $t_0=\ord_{er}(q)$, hence $k$ divides $t$. Since $r$ divides $q^k-1$, it follows that $r$ divides $q^t-1$. Therefore, from $b^r=a$ we obtain $$1=b^{q^t-1}=a^{\frac{q^t-1}{r}}.$$ This shows that $\frac{q^t-1}{r}$ is divisible by $e$, hence $q^t\equiv1 \pmod {er}$ and so $t$ is divisible by $t_0$. Let $\delta$ be any primitive element of $\F_{q^{t_0}}$. Since $\ord(a)=e$ divides $q^{t_0}-1$, $a=\delta^{\frac{(q^{t_0}-1)u}{e}}$ for some positive integer $u\le e$ such that $\gcd(u, e)=1$. In particular, $a=\delta_0^{r}$, where $\delta_0=\delta^{\frac{(q^{t_0}-1)u}{er}}$. Moreover, since $q^{t_0}-1$ is divisible by $r$, $\F_{q^{t_0}}$ contains the $r$ elements $\gamma_1, \ldots, \gamma_r$ of $\overline{\F}_q$ such that $\gamma_i^r=1$. The roots of $x^r-a$ are the elements $\gamma_i\delta_0\in \F_{q^{t_0}}$. In particular, $b\in \F_{q^{t_0}}$ and so $\deg(b)=t\le t_0$. Since $t$ is divisible by $t_0$, we have that $t=t_0$, i.e., $\deg_q(b)=\ord_{er}(q)$. 
\end{proof}




The following technical lemmas are useful.

\begin{lemma}\label{degrees}
Let $\alpha \in \overline{\F}_q$ be an element such that $\deg_q(\alpha)=n\ge 3$ and set $P_c(\alpha)=\frac{\alpha+\theta}{\alpha+\theta^q}$, with $\theta, \theta^q$ as before. The following hold:
\begin{enumerate}[(i)]
\item If $n\not\equiv 2\pmod 4$,  then $\deg_q(P_c(\alpha))=e(n)$, where $e(n)=\mathrm{lcm}(n, 2)$.
\item If $n\equiv 2\pmod 4$, then $\deg_q(P_c(\alpha))=n$ or $\frac{n}{2}$.
\end{enumerate}

In particular, if $\deg_q(P_c(\alpha))$ is even, then $\deg_q(P_c(\alpha))=e(n)$.
\end{lemma}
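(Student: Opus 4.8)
The plan is to compute $\deg_q(\beta)$ for $\beta := P_c(\alpha) = \frac{\alpha+\theta}{\alpha+\theta^q}$ directly through the Frobenius action. First I would observe that since $\theta \in \F_{q^2}$ and $\alpha \in \F_{q^n}$, we have $\beta \in \F_{q^{e(n)}}$, so $\deg_q(\beta)$ is the least positive integer $d$ with $\beta^{q^d} = \beta$, and in particular $\deg_q(\beta)$ divides $e(n)$. The whole argument then reduces to deciding, for each $d \mid e(n)$, whether $\beta^{q^d} = \beta$.

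The key computation is to raise $\beta$ to the power $q^d$ and split on the parity of $d$, using that $\theta^{q^d} = \theta$ when $d$ is even and $\theta^{q^d} = \theta^q$ when $d$ is odd (as $\theta \in \F_{q^2} \setminus \F_q$). Writing $\alpha_d := \alpha^{q^d}$ and clearing denominators, a short cross-multiplication in which the factor $\theta^q - \theta \neq 0$ cancels yields the two clean equivalences
\[
\beta^{q^d} = \beta \ (d \text{ even}) \iff \alpha_d = \alpha \iff n \mid d,
\]
\[
\beta^{q^d} = \beta \ (d \text{ odd}) \iff \alpha_d = -\alpha - 1,
\]
where I use that the eigenvalues of $A_c$ satisfy $\theta + \theta^q = 1$ (they are the roots of $x^2 - x - c$).

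With these in hand I would extract two structural facts. From the even case, any even $d$ fixing $\beta$ is a multiple of $n$. From the odd case, if some odd $d$ fixes $\beta$, then applying the Frobenius $\sigma^d \colon x \mapsto x^{q^d}$ once more to $\alpha_d = -\alpha - 1$ gives $\alpha^{q^{2d}} = -\alpha_d - 1 = \alpha$, so $n \mid 2d$; moreover such an odd $d$ can never satisfy $n \mid d$, since $n \mid d$ would force $\alpha = -\alpha - 1$, i.e. $2\alpha = -1$, which is impossible ($\alpha \in \F_q$ in odd characteristic, and $1 = 0$ in characteristic two). The constraint $n \mid 2d$ with $d$ odd forces $\nu_2(n) \le 1$, so an odd fixing $d$ can occur only when $n$ is odd or $n \equiv 2 \pmod 4$, and then $n/\gcd(n,2)$ must divide $d$.

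Finally I would assemble the three cases from these divisibility constraints together with $\deg_q(\beta) \mid e(n)$. If $\deg_q(\beta) =: m$ is even, the even case forces $n \mid m$, and combined with $m \mid e(n)$ this pins $m = e(n)$ (this also proves the ``in particular'' clause). If $4 \mid n$, no odd $d$ can fix $\beta$ by the valuation constraint, so $m$ is even and equals $e(n) = n$; if $n$ is odd, an odd fixing $d$ would give $n \mid d$ (as $\gcd(n,2)=1$), which we excluded, so again $m$ is even and $m = e(n) = 2n$ — this is part (i). When $n \equiv 2 \pmod 4$, the only admissible odd value is $m = n/2$ (the unique odd divisor of $e(n)=n$ that is a multiple of $n/2$), while the smallest admissible even value is $m = n$; hence $\deg_q(\beta) \in \{n/2, n\}$, giving part (ii). I expect the main obstacle to be the bookkeeping around the odd case: the affine twist $\alpha_d = -\alpha - 1$ (rather than $\alpha_d = \alpha$) is precisely what makes the $2$-adic valuation of $n$ enter, and it must be tracked carefully to separate $n \equiv 2 \pmod 4$ from the remaining residues.
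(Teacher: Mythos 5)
Your proposal is correct: every step checks out, including the cross-multiplication giving $\alpha^{q^d}=\alpha$ for even fixing exponents and $\alpha^{q^d}=-\alpha-1$ for odd ones (using $\theta+\theta^q=1$), the exclusion of $2\alpha=-1$ in all characteristics, and the final case assembly. Your route follows the same skeleton as the paper's proof --- both establish $\deg_q(P_c(\alpha))\mid e(n)$, both show that $n$ divides $e(d)$ for any fixing exponent $d$, and both must rule out an odd fixing exponent that is a multiple of $n$ --- but the mechanism differs at the key step. The paper argues multiplicatively and more abstractly: it uses that $P_c$ is an injective map on $S=\overline{\F}_q\setminus\F_{q^2}$ to deduce $\alpha^{q^{e(d)}}=\alpha$ from $P_c(\alpha)^{q^{e(d)}}=P_c(\alpha)$, and for odd $n$ it invokes the inversion identity $P_c(\alpha)^{q^n}=P_c(\alpha)^{-1}$ together with $P_c(\alpha)\neq\pm 1$. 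You instead work additively: clearing denominators turns the fixed-point condition into the affine relation $\alpha^{q^d}=-\alpha-1$, whose incompatibility with $\alpha^{q^d}=\alpha$ replaces the paper's $\beta\neq\pm1$ argument (the two obstructions are equivalent, since $\beta=-1$ unwinds exactly to $2\alpha=-(\theta+\theta^q)=-1$). Your version buys a slightly finer intermediate statement --- an exact characterization of \emph{all} exponents fixing $P_c(\alpha)$, split by parity --- which makes the three cases fall out uniformly from divisibility bookkeeping, whereas the paper needs a separate ad hoc argument for the odd-$n$ case; the paper's version is shorter and reuses machinery (the permutation $P_c$ on $S$) already set up for Lemma~\ref{lem:aux-dynamic}.
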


\begin{proof}
Let $d=\deg_q(P_c(\alpha))$. Because $d$ divides $e(d)$,  we have  $P_c(\alpha)^{q^{e(d)}}=P_c(\alpha)$. Since $\theta^{q^2}=\theta$, it follows that $P_c(\alpha^{q^{e(d)}})=P_c(\alpha)$ (recall that $P_c$ is a permutation on $S$ and $\alpha, \alpha^{q^{e(d)}}\in S$). In particular, $\alpha^{q^{e(d)}}=\alpha$ and so $e(d)$ is divisible by $n$.
We observe that, since $\deg(\alpha)=n$, $\alpha\in \F_{q^n}$ and, in particular, $\alpha^{q^n}=\alpha$: since $e(n)$ is even and divisible by $n$, it follows that $P_c(\alpha)^{q^{e(n)}}=P_c(\alpha)$, hence $P_c(\alpha)\in \F_{q^{e(n)}}$. This shows that $e(n)$ is divisible by $d$. In conclusion, $n$ divides $e(d)$ and $d$ divides $e(n)$.

\begin{enumerate}[(i)]
\item Since $n$ divides $e(d)$ and $d$ divides $e(n)$, for $n\equiv 0\pmod 4$ we conclude that $d=n=e(n)$. For $n$ odd, we conclude that $d=n$ or $2n$. If $d=n$ odd, $P_c(\alpha)=P_c(\alpha)^{q^n}=P_c(\alpha)^{-1}$. However, for any $\deg(\alpha)=n\ge 3$, $P_c(\alpha)\not \in \F_{q^2}$ and so $P_c(\alpha)\ne \pm 1$. In particular, $P_c(\alpha)^{-1}\ne P_c(\alpha)$, hence $d=2n=e(n)$ for $n$ odd.

\item Since $n$ divides $e(d)$ and $d$ divides $e(n)$, for $n\equiv 2\pmod 4$, we conclude that $d=n=e(n)$ or $d=\frac{n}{2}$.
\end{enumerate}
Since $e(n)$ is even and $\frac{n}{2}$ is odd for $n\equiv 2\pmod 4$, if $\deg_q(P_c(\alpha))$ is even, $\deg_q(P_c(\alpha))=e(n)$.
\end{proof}

The following lemma is a particular case of the Lifting the Exponent Lemma (LTE), a well-known result in the Olympiad folklore. For its proof, see Proposition 1 of~\cite{B77}.

\begin{lemma} Let $r$ be a prime and $b$ a positive integer such that $r$ divides $b-1$. For any positive integer $n$, the following hold:

\begin{enumerate}[(i)]
\item if $r$ is odd, then $\nu_r(b^n-1)=\nu_r(b-1)+\nu_r(n)$;
\item if $r=2$, then $\nu_2(b^n-1)=\nu_2(b-1)+\nu_2(n)+\nu_2(b+1)-1$ if $n$ is even and $\nu_2(b^n-1)=\nu_2(b-1)$ if $n$ is odd.
\end{enumerate}
\end{lemma}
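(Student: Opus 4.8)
The plan is to reduce the whole statement to controlling the $r$-adic valuation of the geometric-sum cofactor in $b^n-1=(b-1)\,S_n$ with $S_n=\sum_{i=0}^{n-1}b^i$, and to split into the odd-prime and $r=2$ cases, since the integrality of $\frac{r-1}{2}$ is precisely what distinguishes them. The first step, common to both cases, is the congruence $S_n\equiv n\pmod r$, which follows because $b\equiv 1\pmod r$ forces every $b^i\equiv 1$. Consequently, whenever $r\nmid n$ one has $\nu_r(S_n)=0$ and hence $\nu_r(b^n-1)=\nu_r(b-1)$; this already disposes of the case $\nu_r(n)=0$, and in particular of the odd-$n$ statement when $r=2$.

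For odd $r$, the heart of the argument is a single prime step asserting $\nu_r(b^r-1)=\nu_r(b-1)+1$. I would prove it by writing $b=1+r^a t$ with $a=\nu_r(b-1)\ge 1$ and $r\nmid t$, expanding $S_r=\sum_{i=0}^{r-1}(1+r^a t)^i$ by the binomial theorem, and grouping by powers of $r^a$: the constant terms sum to $r$, the linear terms sum to $r^a t\sum_{i=0}^{r-1}i=r^{a+1}t\cdot\frac{r-1}{2}$, and every remaining term carries a factor $r^{2a}$. Since $a\ge 1$ gives $2a\ge a+1$ and $r$ odd makes $\frac{r-1}{2}$ an integer, both contributions are divisible by $r^{a+1}$, so $S_r\equiv r\pmod{r^{a+1}}$ and $\nu_r(S_r)=1$. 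The general formula then follows by writing $n=r^k m$ with $r\nmid m$, applying the prime step $k$ times to the base $b^m$ (which still satisfies $r\mid b^m-1$), and invoking the first step $\nu_r(b^m-1)=\nu_r(b-1)$, yielding $\nu_r(b^n-1)=\nu_r(b-1)+k=\nu_r(b-1)+\nu_r(n)$.

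For $r=2$ and even $n$, I would write $n=2^k m$ with $m$ odd and $k\ge 1$, put $B=b^m$, and telescope
$$B^{2^k}-1=(B-1)(B+1)\prod_{j=1}^{k-1}\bigl(B^{2^j}+1\bigr).$$
The first step gives $\nu_2(B-1)=\nu_2(b-1)$, and the parallel factorization $b^m+1=(b+1)\sum_{i=0}^{m-1}(-1)^i b^i$---an alternating sum of $m$ odd terms, hence odd---gives $\nu_2(B+1)=\nu_2(b+1)$. The decisive input is that for $j\ge 1$ the odd square $B^{2^j}$ satisfies $B^{2^j}\equiv 1\pmod 8$, so each factor $B^{2^j}+1\equiv 2\pmod 8$ contributes exactly $\nu_2=1$. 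Adding the valuations across the product produces $\nu_2(b^n-1)=\nu_2(b-1)+\nu_2(b+1)+(k-1)$, which is the asserted formula with $k=\nu_2(n)$.

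I expect the $r=2$ case to be the main obstacle, for a structural reason. In the odd-prime step the one gained power of $r$ comes entirely from $\sum_{i=0}^{r-1}i=\frac{r(r-1)}{2}$ being divisible by $r$, which needs $\frac{r-1}{2}\in\Z$; when $r=2$ this sum equals $1$ and the mechanism collapses, so the naive lifting fails. The fix is to peel off both $b-1$ and $b+1$ and to evaluate the intermediate factors $B^{2^j}+1$ through the $\pmod 8$ observation---this is exactly what forces the extra term $\nu_2(b+1)$ and the compensating $-1$ into the final formula, and keeping that bookkeeping exact is the only genuinely delicate part.
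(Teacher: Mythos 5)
Your proof is correct, but it cannot be compared step-by-step with the paper's, because the paper contains no proof of this lemma at all: the authors state it as a particular case of the Lifting the Exponent Lemma and defer entirely to Proposition 1 of Beyl~\cite{B77}. Your argument is therefore a self-contained replacement for that citation, and every step of it checks out: the factorization $b^n-1=(b-1)\sum_{i=0}^{n-1}b^i$ together with $\sum_{i=0}^{n-1}b^i\equiv n\pmod r$ correctly disposes of the case $r\nmid n$ (and hence of odd $n$ when $r=2$); the single-prime step $\nu_r(b^r-1)=\nu_r(b-1)+1$ for odd $r$ is sound, the needed divisibility coming exactly from the integrality of $\frac{r-1}{2}$ and from $2a\ge a+1$ when $a\ge 1$; iterating that step on the bases $b^{mr^j}$ (each of which is $\equiv 1\pmod r$, so the step does apply) yields item (i); and for $r=2$ with $n=2^km$, $m$ odd, the telescoping factorization $(B-1)(B+1)\prod_{j=1}^{k-1}\bigl(B^{2^j}+1\bigr)$ with $B=b^m$, the oddness of the alternating cofactor of $b^m+1$, and the fact that odd squares are $\equiv 1\pmod 8$ (so each factor with $j\ge 1$ has $2$-adic valuation exactly $1$) produce $\nu_2(b-1)+\nu_2(b+1)+(k-1)$, which is the claimed formula. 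What your route buys is that the paper becomes self-contained on this point, using only the binomial theorem and elementary congruences; what the authors' choice buys is brevity and a pointer to the classical source. Your closing diagnosis of why $r=2$ must be treated separately---the collapse of the $\frac{r(r-1)}{2}$ mechanism when $\frac{r-1}{2}$ is no longer an integer---is also accurate, and is precisely the structural reason the even case of LTE carries the extra $\nu_2(b+1)-1$ term.
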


In particular, from the previous lemma, it follows that for any positive integer $d$ the equality
\begin{equation}\label{LTE}\nu_r(b^d-1)=\nu_r(b-1)+\nu_r(d),\end{equation}
holds if $r$ is an odd prime divisor of $b-1$ or $r=2$ and $b\equiv 1\pmod 4$. From this last equality, we obtain the following result.

\begin{lemma}\label{lifting}
Let $e$ be a positive integer such that $\ord_e(q)=n$ is even and let $r$ be a positive integer such that every prime divisor of $r$ also divides $q^n-1$. Write $q^n-1=e\cdot r_0\cdot E$, where every prime divisor of $r_0$ divides $r$ and $\gcd(E, r)=1$. Then $\ord_{er}(q)=n\cdot \frac{r}{\gcd(r, r_0)}$.
\end{lemma}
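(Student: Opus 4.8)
The plan is to reduce the computation of $\ord_{er}(q)$ to a prime-by-prime analysis governed by the Lifting the Exponent equality in Eq.~\eqref{LTE}. First I would observe that since $e\mid er$, any exponent $k$ with $q^k\equiv 1\pmod{er}$ also satisfies $q^k\equiv 1\pmod e$, so $\ord_e(q)=n$ divides $\ord_{er}(q)$. Writing $\ord_{er}(q)=n\cdot s$ and setting $Q=q^n$, the problem becomes: find the least positive integer $s$ such that $er\mid Q^s-1$. By hypothesis $e\mid Q-1$, and every prime divisor of $r$ divides $q^n-1=Q-1$. Moreover, from $q^n-1=e\cdot r_0\cdot E$ with $\gcd(E,r)=1$, for every prime $\ell\mid r$ we have $\ell\nmid E$, hence $\nu_\ell(Q-1)=\nu_\ell(e)+\nu_\ell(r_0)$.

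Next I would check the divisibility $er\mid Q^s-1$ one prime at a time. For a prime $\ell\nmid r$ we have $\nu_\ell(er)=\nu_\ell(e)\le\nu_\ell(Q-1)\le\nu_\ell(Q^s-1)$, so the condition holds for every $s\ge 1$ and imposes no constraint. For a prime $\ell\mid r$ we have $\ell\mid Q-1$, so Eq.~\eqref{LTE} gives $\nu_\ell(Q^s-1)=\nu_\ell(Q-1)+\nu_\ell(s)$; combined with the formula for $\nu_\ell(Q-1)$ above, the requirement $\nu_\ell(er)\le\nu_\ell(Q^s-1)$ is equivalent to $\nu_\ell(s)\ge\nu_\ell(r)-\nu_\ell(r_0)$. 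Since $\nu_\ell(s)\ge 0$, this is the same as $\nu_\ell(s)\ge\max(0,\nu_\ell(r)-\nu_\ell(r_0))=\nu_\ell(r/\gcd(r,r_0))$. Assembling these constraints over all primes, the least admissible $s$ is $\prod_{\ell}\ell^{\nu_\ell(r/\gcd(r,r_0))}=\frac{r}{\gcd(r,r_0)}$, whence $\ord_{er}(q)=n\cdot\frac{r}{\gcd(r,r_0)}$.

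The delicate point, and the reason the hypothesis that $n$ is even is required, lies in justifying the use of Eq.~\eqref{LTE} at the prime $\ell=2$, which is valid only when $Q\equiv 1\pmod 4$. The key observation is that if $2\mid r$, then $2$ divides $q^n-1$ (every prime divisor of $r$ divides $q^n-1$), forcing $q$ to be odd; since $n$ is even, $Q=q^n$ is an odd perfect square, so $Q\equiv 1\pmod 8$ and in particular $Q\equiv 1\pmod 4$. Thus Eq.~\eqref{LTE} applies at $\ell=2$ precisely in the only case where $2$ contributes a constraint, while if $2\nmid r$ the prime $2$ falls under the vacuous first case and no parity hypothesis is needed. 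This is the step I expect to demand the most care, as it is where the evenness of $n$ is genuinely used.
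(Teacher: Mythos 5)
Your proposal is correct and takes essentially the same route as the paper's proof: both reduce the problem to finding the least $s$ with $er\mid q^{ns}-1$ and then apply Eq.~\eqref{LTE} with base $b=q^n$, using the evenness of $n$ to guarantee $q^n\equiv 1\pmod 4$ so that the case $\ell=2$ is covered exactly when $2\mid r$. Your explicit prime-by-prime valuation bookkeeping is just an expanded version of the divisibility equivalence ($er\mid q^{ns}-1$ iff $\frac{r}{\gcd(r,r_0)}$ divides $\frac{q^{ns}-1}{q^n-1}$, iff $\frac{r}{\gcd(r,r_0)}\mid s$) that the paper states more tersely.
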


\begin{proof}
We observe that $\ord_{er}(q)$ is divisible by $\ord_e(q)=n$. Write $\ord_{er}(q)=ns$, hence $s$ is the least positive integer such that $er$ divides $q^{ns}-1$. Since $q^{ns}-1=\frac{q^{ns}-1}{q^n-1}\cdot (q^n-1)$ and $q^n-1=e\cdot r_0\cdot E$, we conclude that $er$ divides $q^{ns}-1$ if and only if $\frac{r}{\gcd(r, r_0)}$ divides $\frac{q^{ns}-1}{q^n-1}$. If $T$ is any prime divisor of $\frac{r}{\gcd(r, r_0)}$ (hence a divisor of $r$), from hypothesis, $T$ divides $q^n-1$. Since $n$ is even, if $T=2$, then $q$ is odd and so $q^n\equiv 1\pmod 4$. In particular, we can apply Eq.~\eqref{LTE} for $b=q^{n}$ and $d=s$ and so we conclude that $$\nu_T\left(\frac{q^{ns}-1}{q^n-1}\right)=\nu_T(s).$$
From the last equality, we conclude that $er$ divides $q^{ns}-1$ if and only if $s$ is divisible by $\frac{r}{\gcd(r, r_0)}$. Since $s$ is minimal, we have that $s=\frac{r}{\gcd(r, r_0)}$.
\end{proof}

The following theorem shows that, under some generic conditions, the $Q_c$-transform is (transitive) efficient.

\begin{theorem}\label{thm:efficient}
Let $n\ge 3$ be a positive integer, $f\in \F_q[x]$ an irreducible polynomial of degree $n$ and $Q_c$ the canonical rational function associated to $A_c$ with $\ord([A_c])=D$. Let $f^{Q_c}$ be defined as in Definition~\ref{def:Q_c}. Suppose that $f^{Q_c}$ is irreducible. The following hold:

\begin{enumerate}
\item if $D\not\equiv 2\pmod 4$ or $n$ is even, then $(f^{Q_c})^{Q_c}$ is also irreducible,

\item if $D\equiv 2\pmod 4$ and $n$ is odd, then $(f^{Q_c})^{Q_c}$ is either irreducible or factors into two irreducible polynomials of degree $\frac{nD^2}{2}$.
\end{enumerate}
\end{theorem}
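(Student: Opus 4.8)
The plan is to reduce the problem to computing the degree of $\delta$ over $\F_{q^{nD}}$ for a suitable root, and then to carry out that computation through the conjugacy between $Q_c$ and the power map $x\mapsto x^D$. Fix a root $\alpha\in\F_{q^n}$ of $f$; since $f^{Q_c}$ is irreducible, a root $\gamma$ of $f^{Q_c}$ has $\deg_q(\gamma)=nD$. By Corollary~\ref{cor:main2} the factorization of $(f^{Q_c})^{Q_c}$ is governed by that of $g_c-\gamma h_c$ over $\F_{q^{nD}}$, and by Proposition~\ref{EDF} (the $2$-EDF property of $Q_c$) this polynomial splits into $D/d_0$ irreducible factors of common degree $d_0\cdot nD$, where $d_0=\deg_{q^{nD}}(\delta)\mid D$ for $\delta$ a root of $g_c-\gamma h_c$. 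So the entire statement comes down to pinning $d_0$: part~(1) asserts $d_0=D$, while part~(2) asserts $d_0\in\{D,D/2\}$, the value $d_0=D/2$ producing \emph{exactly} two factors (four factors of degree $nD^2/4$ are impossible, since $d_0\mid D$ and $4\nmid D$).

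Next I pass to the $P_c$-picture. By Lemma~\ref{lem:aux-dynamic}, $P_c$ conjugates the map induced by $Q_c$ on $S$ to $x\mapsto x^D$; writing $u=P_c(\gamma)$ and $v=P_c(\delta)$, the relation $Q_c(\delta)=\gamma$ becomes $v^D=u$, so $v$ is a root of the binomial $x^D-u$. Since $P_c$ is defined over $\F_{q^2}\subseteq\F_{q^{nD}}$ when $nD$ is even (otherwise $D$ is odd and I would instead work over $\F_{q^{2nD}}$ and descend), $P_c$ preserves degrees over $\F_{q^{nD}}$, so $d_0=\deg_{q^{nD}}(v)$ where $u\in\F_{q^{nD}}$. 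Setting $e=\ord(u)$, Lemma~\ref{binomial} applied to $x^D-u$ over $\F_{q^{nD}}$ — legitimate because $D\mid q+1$ divides $q^{nD}-1$ — gives $d_0=\ord_{eD}(q^{nD})$.

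The core is the resulting order computation, which I would run using Lemmas~\ref{binomial}, \ref{degrees} and~\ref{lifting} together with the observation that the irreducibility of $f^{Q_c}$ encodes exactly the analogous \emph{full ramification} one level down: passing from $\alpha$ to $\gamma$ multiplied the degree over $\F_q$ by the full factor $D$. Here the $\nu_2$-calculation of Eq.~\eqref{LTE} underlying Lemma~\ref{lifting} expresses $\ord_{eD}(q^{nD})$ as $D/\gcd(D,r_0)$, where $r_0$ is the $D$-part of $(q^{nD}-1)/e$. Transferring the level-$0$ condition upward by Eq.~\eqref{LTE}, in case~(1) the hypotheses $n$ even or $D\not\equiv2\pmod{2}$... $D\not\equiv2\pmod{4}$ force $\gcd(D,r_0)=1$, whence $d_0=D$ and $(f^{Q_c})^{Q_c}$ is irreducible. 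These hypotheses are used precisely to keep the prime $2$ under control: when $2\mid D$ one has $q$ odd, hence $q^{nD}\equiv1\pmod{4}$, so the prime-$2$ case of Eq.~\eqref{LTE} is valid and Lemma~\ref{degrees} ties the $\F_q$-degrees to the even quantity $\mathrm{lcm}(\cdot,2)$ without loss.

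The step I expect to be the main obstacle is exactly this $2$-adic bookkeeping: reconciling the $\pmod{4}$ dichotomy of Lemma~\ref{degrees} with the implicit valuation $\nu_2$ in Lemma~\ref{lifting}. When $D\equiv2\pmod{4}$ and $n$ is odd we have $\nu_2(D)=1$, and the irreducibility of $f^{Q_c}$ no longer determines $\nu_2(r_0)$; it may then force $\gcd(D,r_0)=2$, in which case $d_0=D/2$ and $(f^{Q_c})^{Q_c}$ breaks into two irreducible factors of degree $nD^2/2$. Thus the two alternatives of part~(2) correspond to $\nu_2(r_0)=1$ versus $\nu_2(r_0)\ge2$, and I would finish with a short case analysis on the parities of $n$ and $D$ and on $\nu_2(D)$, passing to $\F_{q^2}$-extensions wherever an order fails to be even and descending via Lemma~\ref{degrees}, with Proposition~\ref{EDF} guaranteeing throughout that all resulting factors share one common degree.
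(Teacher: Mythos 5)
Your framing — reduce via Proposition~\ref{EDF}, Corollary~\ref{cor:main2} and the conjugation in Lemma~\ref{lem:aux-dynamic} to computing $d_0=\deg_{q^{nD}}(v)$ for a root $v$ of the binomial $x^D-u$, then evaluate $d_0=D/\gcd(D,r_0)$ by order considerations — is sound and is the same circle of ideas as the paper. But the decisive step is missing: you never derive $\gcd(D,r_0)=1$ (case 1), resp.\ $\gcd(D,r_0)\le 2$ (case 2), from the irreducibility of $f^{Q_c}$; you only assert that the hypotheses ``force'' it by ``transferring the level-$0$ condition upward.'' That transfer is the whole theorem. Irreducibility of $f^{Q_c}$ gives you (modulo a parity point discussed below) only $\ord_e(q)=nD$ for $e=\ord(u)$, and this does \emph{not} control the valuations $\nu_T(e)$ at primes $T\mid D$, which is exactly what $\gcd(D,r_0)$ measures: nothing you have written excludes, say, $T\nmid e$, which would put $T\mid r_0$ and give $d_0<D$ even under the hypotheses of part 1. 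To pin down $\nu_T(e)$ one must descend to the bottom level: use $u^D=P_c(\alpha)$, set $E=\ord(P_c(\alpha))$, apply Lemmas~\ref{binomial} and~\ref{lifting} over $\F_q$ to the binomial $x^D-P_c(\alpha)$ so that irreducibility of $f^{Q_c}$ becomes the arithmetic statement $\gcd(D,D_0)=e(n)D/e(Dn)$, where $D_0$ is the $D$-part of $(q^{e(n)}-1)/E$; then argue from ``every prime divisor of $D_0$ divides $D$'' and a parity analysis that $D_0=1$, or $D_0=2^m$ only when $D$ is even and $n$ is odd; and only then can Eq.~\eqref{LTE}, together with the relation $E=e/\gcd(e,D)$, be pushed up to yield $\nu_T(e)=\nu_T(q^{nD}-1)$ for all $T\mid D$. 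This bottom-level analysis is precisely the content of the paper's proof (there it is run with the binomial $x^{D^2}-P_c(\alpha)$ over $\F_q$, comparing $\gcd(D^2,D_0)$ with $\gcd(D,D_0)$); your choice to work over $\F_{q^{nD}}$ instead of $\F_q$ does not let you bypass it, and your proposal replaces it with a sentence.

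Two further omissions of the same kind. First, you repeatedly need $\deg_q(P_c(\cdot))$ to equal the even value $e(\cdot)$ — both to place $u$ in $\F_{q^{nD}}$ and to get $\ord_e(q)=nD$ rather than $nD/2$ — but Lemma~\ref{degrees} leaves open the alternative $\deg_q(P_c(\alpha))=n/2$ when $n\equiv 2\pmod 4$. The paper excludes this by a separate claim whose proof uses the irreducibility of $f^{Q_c}$ in an essential way (otherwise the root $\beta$ of $f^{Q_c}$ would lie in $\F_{q^n}$, contradicting $\deg_q(\beta)=Dn$); your plan to ``pass to $\F_{q^2}$-extensions wherever an order fails to be even and descend'' is not a substitute for that argument. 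Second, Lemma~\ref{lifting} as stated requires the relevant order ($\ord_e$ of the base) to be even, so invoking it with base $q^{nD}$, where $\ord_e(q^{nD})=1$, is not literally permitted; you correctly observe that $q^{nD}\equiv 1\pmod 4$ rescues the prime $2$, but then you are re-proving the lemma from Eq.~\eqref{LTE}, and that re-proof, like the descent for the case $nD$ odd, must actually be written out.
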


\begin{proof}
Let $\alpha$ be a root of $f$, $\beta$ be a root of $g_c-\alpha h_c$ and $\gamma$ be a root of $g_c-\beta h_c$. One can easily see that, under these assumptions, $\beta$ is a root of $f^{Q_c}$ with $Q_c(\beta)=\alpha$ and $\gamma$ is a root of $(f^{Q_c})^{Q_c}$ with $Q_c(\gamma)=\beta$. Since $\beta$ is a root of $f^{Q_c}$, that is an irreducible polynomial of degree $nD$, $\deg(\beta)=nD$. We observe that $(f^{Q_c})^{Q_c}$ has degree $nD^2$ and vanishes at $\gamma$. If we set $g=f^{Q_c}$, we have $g$ irreducible and $g^{Q_c}=(f^{Q_c})^{Q_c}$. From Proposition~\ref{EDF}, there exists a divisor $d$ of $D$ such that $g^{Q_c}$ factors as a product of $D/d$ irreducible polynomials over $\F_q$, each of degree $dDn$. This implies that $\deg(\gamma)=dDn$. In particular, in order to prove our result, we just need to show that $d=D$ if $D\not\equiv 2\pmod 4$ or $n$ is even, and $d=D/2$ or $d=D$ if $D\equiv 2\pmod 4$ and $n$ is odd. From Eq.~\eqref{conjugation}, $P_c(\beta)^D=P_c(\alpha)$ and $P_c(\gamma)^{D^2}=P_c(\alpha)$.

\begin{enumerate}[]
\item {\bf Claim.} \emph{The degrees (over $\F_q$) of the elements $P_c(\alpha), P_c(\beta)$ and $P_c(\gamma)$ are even.} 

\item Indeed, since $P_c(\alpha)$ is a power of the elements $P_c(\beta)$ and $P_c(\gamma)$, the degrees of such elements are divisible by the degree of $P_c(\alpha)$ and so it suffices to prove that $\deg_q(P_c(\alpha))$ is even. If $\deg_q(P_c(\alpha))$ were odd, from Lemma~\ref{degrees}, $n\equiv 2\pmod 4$ and $\deg_q(P_c(\alpha))=\frac{n}{2}$. Since $n\ge 3$, $P_c(\alpha)\ne 0$ and, in particular, $P_c(\alpha)^{q^{n/2}-1}=1$.  From the equality $P_c(\beta)^{D}=P_c(\alpha)$, we conclude that $$P_c(\beta)^{D(q^{n/2}-1)}=P_c(\alpha)^{q^{n/2}-1}=1.$$ Since $n/2$ is odd and $D$ divides $q+1$, $D$ divides $q^{n/2}+1$, hence $D(q^{n/2}-1)$ divides $q^n-1$ and so $P_c(\beta)^{q^n-1}=1$, i.e., $P_c(\beta)\in \F_{q^n}$. Since $n$ is even, $\F_{q^n}$ contains the coefficients of $P_c(x)$ and $P_c^{-1}(x)$. Therefore, $\beta=P_c^{-1}(P_c(\beta))$ is also in $\F_{q^n}$, a contradiction with $\deg_q(\beta)=Dn$. This completes the proof of the claim.
\end{enumerate}
From the previous claim and Lemma~\ref{degrees}, it follows that $$\begin{cases}\deg_q(P_c(\alpha))&=e(n),\\ \deg_q(P_c(\beta))&=e(Dn),\\ \deg_q(P_c(\gamma))&=e(dDn).\end{cases}$$ Set $E=\ord(P_c(\alpha))$, the multiplicative order of $P_c(\alpha)$. Recall that $P_c(\beta)^D=P_c(\alpha)$ and so $P_c(\beta)$ is a root of $x^D-P_c(\alpha)$. The number $e(n)$ is even, so $q^{e(n)}-1$ is divisible by $q+1$, hence is divisible by $D$.  From Lemma~\ref{binomial}, $\deg_q(P_c(\beta))=\ord_{DE}(q)$, hence $\ord_{DE}(q)=e(Dn)$. Since $\deg_q(P_c(\alpha))=e(n)$, it follows that $\ord_E(q)=e(n)$ (see Lemma~\ref{lem:deg-order}). Write $q^{e(n)}-1=E\cdot D_0\cdot s$, where every prime divisor of $D_0$ divides $D$ and $\gcd(s, D)=1$. From Lemma~\ref{lifting}, $\ord_{DE}(q)=e(n)\cdot \frac{D}{\gcd(D, D_0)}$ and so we conclude that $e(n)\cdot \frac{D}{\gcd(D, D_0)}=e(Dn)$, i.e., $\gcd(D, D_0)=\frac{e(n)D}{e(Dn)}$. Analyzing the parity of $D$ and $n$, we have that $\frac{e(n)D}{e(Dn)}=1$ unless $D$ is even and $n$ is odd and, in this case, $\frac{e(n)D}{e(Dn)}=2$. Since $\gcd(D, D_0)=\frac{e(n)D}{e(Dn)}$ and every prime divisor of $D_0$ divides $D$, it follows that $D_0=1$ unless $D$ is even and $n$ is odd and, in this case, $D_0=2^m$ for some $m\ge 1$.
Therefore, $\gcd(D^2, D_0)=\gcd(D, D_0)$ unless $D\equiv 2\pmod 4$, $n$ is odd and $D_0=2^m$ with $m\ge 2$. In the latter case, one has $\gcd(D, D_0)=2$ and $\gcd(D^2, D_0)=4$. We split into cases.

\begin{itemize}
\item {\bf Case 1:} $D\not\equiv 2\pmod 4$ or $n$ is even. In this case, we have that $\gcd(D^2, D_0)=\gcd(D, D_0)$. Since $P_c(\gamma)$ is a root of $x^{D^2}-P_c(\alpha)$ and $e(dDn)=\deg_q(P_c(\gamma))$, from Lemma~\ref{binomial}, $e(dDn)=\ord_{D^2E}(q)$. From Lemma \ref{lifting}, we have that $\ord_{D^2E}(q)=\frac{e(n)D^2}{\gcd(D^2, D_0)}$ and so $$\ord_{D^2E}(q)=\frac{e(n)D^2}{\gcd(D, D_0)}=D\cdot e(Dn).$$ Because $d$ is a divisor of $D$, a simple analysis on the parity of $d$ shows that $e(dDn)=d\cdot e(Dn)$ and so 
$$d\cdot e(Dn)=e(dDn)=\ord_{D^2E}(q)=D\cdot e(Dn).$$
Therefore, $d=D$ and so $(f^{Q_c})^{Q_c}$ is irreducible.
\item {\bf Case 2:} $D\equiv 2\pmod 4$ and $n$ is odd. In this case, we have $\gcd(D^2, D_0)=a\cdot \gcd(D, D_0)$, where $a=2$ or $a=1$, according to whether $D_0$ is divisible by $4$ or not, respectively. Following the proof of {\bf Case 1}, we obtain $d=D/a$.
\end{itemize} 
\end{proof}
The following corollary is immediate.
\begin{corollary}\label{cor:transitive}
Let $n\ge 3$ be a positive integer, $f\in \F_q[x]$ an irreducible polynomial of degree $n$ and $Q_c$ the canonical rational function associated to $A_c$ with $\ord([A_c])=D$. Suppose that $f^{Q_c}$ is irreducible. If $D\not\equiv 2\pmod 4$ or $n$ is even, then the sequence $\{f_i\}_{i\ge 0}$ with $f_0=f$ and $f_i=f_{i-1}^{Q_c}$ is a sequence of irreducible polynomials such that $\deg(f_i(x))=nD^i$ for any $i\ge 0$.
\end{corollary}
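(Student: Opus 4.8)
The plan is to obtain the corollary from Theorem~\ref{thm:efficient} by a routine induction on $i$; the only point demanding attention is that the hypothesis relating $n$ and $D$ is inherited by every term of the sequence. First I would settle the degrees: since $\deg(g_c)=D>D-1=\deg(h_c)$, Proposition~\ref{prop:Q-transform}(a) gives $\deg(h^{Q_c})=D\cdot\deg(h)$ for any $h$, so $\deg(f_{i+1})=D\cdot\deg(f_i)$ and hence $\deg(f_i)=nD^i$ by an immediate induction. In particular $\deg(f_i)\ge 3$ for all $i$ (as $n\ge 3$ and $D>2$), so the degree hypothesis of Theorem~\ref{thm:efficient} is always satisfied.

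For irreducibility I would induct on $i$ with the hypothesis that $f_i$ and $f_{i+1}$ are both irreducible. The base case $i=0$ is exactly the assumption of the corollary: $f_0=f$ and $f_1=f^{Q_c}$ are irreducible. In the inductive step I would apply Theorem~\ref{thm:efficient} to $f_i$, a polynomial of degree $nD^i\ge 3$ whose transform $f_i^{Q_c}=f_{i+1}$ is irreducible by hypothesis; the theorem then gives that $(f_i^{Q_c})^{Q_c}=f_{i+1}^{Q_c}=f_{i+2}$ is irreducible, which advances the induction and establishes that every $f_i$ is irreducible of degree $nD^i$.

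The step I expect to be the only subtle one, and hence the main obstacle, is verifying that the dichotomy ``$D\not\equiv 2\pmod 4$ or the degree is even'' of Theorem~\ref{thm:efficient} survives the replacement of $n$ by $nD^i$ at each stage. I would dispose of this by a short case analysis. If $D\not\equiv 2\pmod 4$, the condition does not involve the degree and so holds for every $i$. If $D\equiv 2\pmod 4$, then $D$ is even and the standing hypothesis forces $n$ to be even; consequently $nD^i$ is even for all $i\ge 0$ (for $i=0$ because $n$ is even, and for $i\ge 1$ because $D$ already contributes a factor of $2$). Thus the degree-$nD^i$ instance of Theorem~\ref{thm:efficient} is applicable at each stage, and no analytic input beyond the theorem itself is required.
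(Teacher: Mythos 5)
Your proposal is correct and is exactly the argument the paper intends: the paper declares the corollary ``immediate'' from Theorem~\ref{thm:efficient}, and your induction---with the observation that when $D\equiv 2\pmod 4$ the hypothesis forces $n$ even, so every degree $nD^i$ remains even and the theorem stays applicable---is precisely that immediate argument spelled out. No gaps; nothing differs in substance from the paper's route.
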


We conclude this section with two examples.

\begin{example}
Suppose that $q=5$ and consider 
$A_3=\left(\begin{matrix}
0&1\\
3&1\end{matrix}\right)$. The element $[A_3]\in \PGL_2(\F_5)$ has order $6$ and $6\equiv 2\pmod 4$. If $\theta, \theta^5$ are the eigenvalues of $A_3$, $\theta$ is a primitive element of $\F_{5^2}=\F_{25}$ and a direct calculation yields
$$Q_3(x)=\frac{\theta(x+\theta^5)^6-\theta^5(x+\theta)^6}{(x+\theta)^6-(x+\theta^5)^6}=\frac{x^6+x+2}{x^5-x}.$$
For $f(x)=x^3+4x+3$, one can verify that $g=f^{Q_c}$ is an irreducible polynomial of degree $18=3\cdot 6$ but $g^{Q_c}$ factors as two irreducible factors of degree $54=\frac{3\cdot 6^2}{2}$. If we set $f_0(x)=x^6+2x+3$, one can verify that
$$f_1(x)=f_0^{Q_3}(x)=x^{36}+3x^{31} + 4x^{26} + 4x^{25} + 4x^{11} + 2x^{10} + 4x^6 + 3x^5 + 2x + 4,$$
is an irreducible polynomial. In particular, from Corollary~\ref{cor:transitive}, the sequence $f_i$ defined recursively by 
$$f_i(x)=(x^5-x)^{6^i}\cdot f_{i-1}\left(\frac{x^6+x+2}{x^5-x}\right),$$
is a sequence of irreducible polynomials of degree $6^{i+1}$ over $\F_5$. The first terms of this sequence are given as follows.
{\small\begin{align*}
f_0(x)=&\,x^6+2x+3,\\
f_1(x)=&\,x^{36}+3x^{31} + 4x^{26} + 4x^{25} + 4x^{11} + 2x^{10} + 4x^6 + 3x^5 + 2x + 4,\\
f_2(x)=&\,x^{216 }+ 4x^{211 }+ 2x^{210 }+ 2x^{206 }+ 3x^{200 }+ 3x^{191 }+ 2x^{190 }+ 4x^{186 }+ x^{185 }+ x^{181 }+\\{}& 4x^{176 }+ x^{175 }+ x^{166 }+ 3x^{156 }+4x^{155 }+ x^{151 }+ 4x^{150 }+ 2x^{141 }+ 3x^{140 }+ 2x^{136 }+\\{}& x^{135 }+ 2x^{131 }+ 4x^{130 }+ 3x^{126 }+2x^{125 }+ 2x^{91 }+ 3x^{90 }+ 3x^{86 }+  x^{85 }+ 4x^{81 }+ \\{}&3x^{80 }+ 2x^{76 }+ 4x^{75 }+ 2x^{66 }+ 4x^{65 }+  2x^{61 }+ 3x^{60 }+ x^{56 }+ 4x^{51 }+ 2x^{50 }+ 3x^{41 }+\\{}& x^{40 }+ 3x^{35 }+ x^{31 }+   3x^{30 }+ 3x^{26 }+ 4x^{16 }+ 3x^{15 }+ 2x^{11 }+ 3x^{10 }+ 4x^{6 }+ 2x^{5 }+\\{}& 3x +1.
\end{align*}}
\end{example}

\begin{example}
Suppose that $q=2$ and consider 
$A_1=\left(\begin{matrix}
0&1\\
1&1\end{matrix}\right)$. The element $[A_3]\in \PGL_2(\F_2)$ has order $3$. If $\theta, \theta^2$ are the eigenvalues of $A_3$, $\theta$ is a primitive element of $\F_{2^2}=\F_{4}$ and a direct calculation yields
$$Q_1(x)=\frac{\theta(x+\theta^2)^3-\theta^2(x+\theta)^3}{(x+\theta)^3-(x+\theta^2)^3}=\frac{x^3+x+1}{x^2+x}.$$
From Example~\ref{example:type4-recursive}, $f_0(x)=x^4+x^3+1$ and $$f_1(x)=f_0^{Q_1}(x)=x^{12 }+ x^{11 }+ x^{10 }+ x^{9 }+ x^{8 }+ x^{6 }+ x^{4 }+ x + 1,$$
are irreducible over $\F_2$. In particular, from Corollary~\ref{cor:transitive}, the sequence $f_i$ defined recursively by 
$$f_i(x)=(x^2+x)^{4\cdot 3^{i-1}}\cdot f_{i-1}\left(\frac{x^3+x+1}{x^2+x}\right),$$
is a sequence of irreducible polynomials of degree $4\cdot 3^i$ over $\F_2$. The first terms of this sequence are given as follows.
{\small\begin{align*}
f_0(x)=&\,x^4+x^3+1,\\
f_1(x)=&\,x^{12 }+ x^{11 }+ x^{10 }+ x^{9 }+ x^{8 }+ x^{6 }+ x^{4 }+ x + 1,\\
f_2(x)=& \,x^{36 }+ x^{35 }+ x^{32 }+ x^{28 }+ x^{27 }+ x^{26 }+ x^{20 }+ x^{17 }+ x^{12 }+ x^{11 }+ x^{9 }+ x^{8 }+\\{}& x^{4 }+ x^{3 }+ x^{2 }+ x + 1,\\
f_3(x)=& \,x^{108 }+ x^{107 }+ x^{106 }+ x^{105 }+ x^{104 }+ x^{102 }+ x^{99 }+ x^{91 }+ x^{90 }+ x^{89 }+ x^{88 }+ x^{86 }+ \\{}&x^{84 }+ x^{83 }+ x^{80 }+ x^{75 }+ x^{73 }+ x^{68 }+  x^{67 }+ x^{66 }+  x^{64 }+ x^{60 }+  x^{58 }+ x^{54 }+ x^{50 }+\\{}& x^{48 }+ x^{44 }+ x^{43 }+ x^{42 }+ x^{40 }+ x^{36 }+ x^{34 }+ x^{33 }+ x^{32 }+ x^{27 }+x^{22 }+ x^{17 }+  x^{12 }+\\{}& x^{11 }+ x^{6 }+ x^{4 }+ x + 1,\\
\end{align*}}
{\small\begin{align*}
f_4(x)=& \,x^{324 }+ x^{323 }+ x^{320 }+ x^{308 }+ x^{307 }+ x^{306 }+ x^{291 }+ x^{290 }+ x^{289 }+ x^{274 }+ x^{273 }+\\{}& x^{272 }+ x^{259 }+ x^{258 }+ x^{257 }+ x^{256 }+  x^{244 }+ x^{243 }+ x^{242 }+ x^{228 }+  x^{227 }+ x^{225 }+\\{}& x^{224 }+ x^{210 }+ x^{209 }+ x^{208 }+ x^{195 }+ x^{192 }+ x^{180 }+ x^{179 }+x^{177 }+ x^{176 }+  x^{164 }+\\{}& x^{161 }+ x^{148 }+ x^{147 }+ x^{146 }+ x^{131 }+  x^{130 }+ x^{129 }+ x^{114 }+ x^{113 }+ x^{112 }+ x^{100 }+\\{}& x^{99 }+x^{98 }+ x^{84 }+ x^{83 }+ x^{81 }+   x^{80 }+ x^{67 }+ x^{50 }+ x^{49 }+ x^{48 }+ x^{35 }+ x^{32 }+ x^{20 }+\\{}& x^{19 }+x^{17 }+ x^{16 }+ x^{4 }+ x^{3 }+ x^{2 }+ x + 1.
\end{align*}}
\end{example}

\section{Future research}
Here we propose a problem and a conjecture based on theoretical and practical considerations. We recall that we provided a recursive and a random method for producing irreducible polynomials of the form $f^{Q_c}$. We ask if we may obtain a criterion for the irreducibility for polynomials $f^{Q_c}$. More specifically, we propose the following problem.

\begin{problem}
Let $f\in \F_q[x]$ be an irreducible polynomial of degree $n\ge 3$. Provide necessary and sufficient conditions on $f$ to ensure the irreducibility of $f^{Q_c}$.
\end{problem}
 
Recall that Theorem~\ref{thm:efficient} entails that $Q_c$ transforms are partially (transitive) efficient; if $f\in \F_q[x]$ is irreducible of degree $n\ge 3$ such that $f^{Q_c}$ is irreducible, then $(f^{Q_c})^{Q_c}$ is also irreducible provided that $n$ is even or $D\not\equiv 2\pmod 4$, where $D$ is the degree of $Q_c$. In addition, if $D\equiv 2\pmod 4$ and $n$ is odd, $(f^{Q_c})^{Q_c}$ is either irreducible or factors into two irreducible polynomials of degree $\frac{nD^2}{2}$. Based on some computational tests, we believe that in the former situation, $(f^{Q_c})^{Q_c}$ is never irreducible.

\begin{conjecture}
Let $f$ be an irreducible polynomial of degree $n\ge 3$ and suppose that $Q_c$ has degree $D$. In addition, suppose that $n$ is odd and $D\equiv 2\pmod 4$. If $f^{Q_c}$ is irreducible, then $(f^{Q_c})^{Q_c}$ factors into two irreducible polynomials of degree $\frac{nD^2}{2}$ over $\F_q$.
\end{conjecture}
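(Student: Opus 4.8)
The plan is to reduce the conjecture to the single borderline quantity that already governs the proof of Theorem~\ref{thm:efficient}. In the relevant case ($n$ odd, $D\equiv 2\pmod 4$, and $f^{Q_c}$ irreducible), that proof writes the number of irreducible factors of $(f^{Q_c})^{Q_c}$ as $D/d$ with $d=D/a$, where $a\in\{1,2\}$ is determined by $\gcd(D^2,D_0)=a\cdot\gcd(D,D_0)$, and $a=2$ exactly when $4\mid D_0$ while $a=1$ when $\nu_2(D_0)=1$; here $D_0$ is the $D$-part of $(q^{e(n)}-1)/E$ and $E=\ord(P_c(\alpha))$ for a root $\alpha$ of $f$. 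Since $\nu_2(D_0)=\nu_2(q^{e(n)}-1)-\nu_2(E)$, proving the conjecture is exactly proving the $2$-adic inequality
\[
\nu_2\bigl(\ord(P_c(\alpha))\bigr)\le \nu_2(q^{e(n)}-1)-2 .
\]
Thus the entire problem collapses to controlling the $2$-part of the order of $P_c(\alpha)$.

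Next I would recast this inequality as a square condition. For $n$ odd one has $(\alpha+\theta)^{q^n}=\alpha+\theta^q$, so $P_c(\alpha)=(\alpha+\theta)^{1-q^n}$ lies in the group $\mu_{q^n+1}$ of $(q^n+1)$-th roots of unity, and a short computation shows that when $q\equiv 3\pmod 4$ the displayed inequality is equivalent to $P_c(\alpha)$ being a square in the cyclic group $\mu_{q^n+1}$. By the norm criterion for $\F_{q^{2n}}/\F_{q^n}$ this is equivalent to $N_{\F_{q^{2n}}/\F_{q^n}}(\alpha+\theta)=\alpha^2+\alpha-c$ being a square in $\F_{q^n}$, equivalently to $\mathrm{Res}(f,x^2+x-c)$ being a square in $\F_q$. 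A uniform packaging is available through Frobenius parity: $(f^{Q_c})^{Q_c}$ is separable of even degree $nD^2$, and since an $nD^2$-cycle is an odd permutation while two cycles of the (even) length $nD^2/2$ form an even permutation, the polynomial splits into two factors if and only if $\mathrm{disc}\bigl((f^{Q_c})^{Q_c}\bigr)$ is a square in $\F_q$. So the conjecture is precisely the assertion that this discriminant is always a square.

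I would then dispose of the easy range. When $q\equiv 1\pmod 4$ one has $\nu_2(q^n+1)=1$, so $\nu_2(\ord(P_c(\alpha)))\le 1$, while $\nu_2(q^{e(n)}-1)-2=\nu_2(q-1)-1\ge 1$; the target inequality holds with room to spare and the conjecture follows unconditionally. This isolates the genuine case $q\equiv 3\pmod 4$, where $\nu_2(q+1)\ge 2$ and the bound is the tight statement $\nu_2(\ord(P_c(\alpha)))\le\nu_2(q+1)-1$, i.e.\ that $P_c(\alpha)$ is a square in $\mu_{q^n+1}$.

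The main obstacle lies here. Tracing the hypothesis through Lemmas~\ref{binomial} and~\ref{lifting} shows that, for $q\equiv 3\pmod 4$, the assumption that $f^{Q_c}$ is irreducible forces $\nu_\ell(\ord(P_c(\alpha)))=\nu_\ell(q^n+1)$ for every odd prime $\ell\mid D$ but imposes \emph{no} condition on $\nu_2(\ord(P_c(\alpha)))$. Hence the required squareness of $\alpha^2+\alpha-c$ cannot be extracted from irreducibility and must come from the intrinsic arithmetic of $Q_c$ and of the eigenvalue $\theta$. My plan would be to compute $\mathrm{disc}\bigl((f^{Q_c})^{Q_c}\bigr)$ modulo squares: writing $(f^{Q_c})^{Q_c}=\prod_i(g_c-\beta_i h_c)$ over the conjugates $\beta_i$ of a root of $f^{Q_c}$, the cross-resultants contribute only squares, so modulo squares the discriminant reduces to a norm of $\mathrm{disc}(g_c-\beta h_c)$, and via the conjugation $Q_c=P_c^{-1}\circ\Gamma_D\circ P_c$ of Lemma~\ref{lem:aux-dynamic} together with $\mathrm{disc}(x^D-b)\equiv \pm b$ this becomes a square condition on a single explicit element. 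I expect the hard part to be proving that the resulting quadratic character is trivial for \emph{every} admissible $f$; the natural input is that here $\theta$ is itself a square in $\F_{q^2}$ (one checks $\nu_2(\ord(\theta))=2<\nu_2(q^2-1)$ under $q\equiv 3\pmod 4$ and $D\equiv 2\pmod 4$), and the crux is to leverage this into the squareness of $\alpha^2+\alpha-c$ in $\F_{q^n}$. This final conversion is, I believe, exactly the step that has kept the statement at the level of a conjecture.
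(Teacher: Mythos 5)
The statement you are addressing is not proved in the paper at all: it is posed as a conjecture, supported only by computational evidence, so there is no proof of the authors' to compare against. The question is therefore whether your proposal itself constitutes a proof, and it does not --- as you concede in your final sentence. Your bookkeeping is correct and valuable as far as it goes: the conjecture is indeed equivalent, via Case 2 of the proof of Theorem~\ref{thm:efficient}, to the inequality $\nu_2(\ord(P_c(\alpha)))\le \nu_2(q^{e(n)}-1)-2$ for every admissible root $\alpha$; your observation that $P_c(\alpha)\in\mu_{q^n+1}$ when $n$ is odd settles the case $q\equiv 1\pmod 4$ (there $\nu_2(\ord(P_c(\alpha)))\le\nu_2(q^n+1)=1$ while $\nu_2(q^{e(n)}-1)-2=\nu_2(q-1)-1\ge 1$), which is a genuine partial result going beyond the paper; and your chain of equivalences for $q\equiv 3\pmod 4$ (squareness of $P_c(\alpha)$ in $\mu_{q^n+1}$, of $N_{\F_{q^{2n}}/\F_{q^n}}(\alpha+\theta)=\alpha^2+\alpha-c$ in $\F_{q^n}$, of $\mathrm{Res}(f,x^2+x-c)$ in $\F_q$, of the discriminant via the Stickelberger parity theorem) is also sound. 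But $q\equiv 3\pmod 4$ is the entire content of the conjecture, and there your proposal stops at a plan: you never show that the quadratic character in question is trivial for every admissible $f$. The one concrete fact you offer toward it --- that $\theta$ is a square in $\F_{q^2}$ under the stated congruences --- is correct but is not connected by any argument to the squareness of $\alpha^2+\alpha-c$; indeed, as you yourself note, irreducibility of $f^{Q_c}$ pins down $\nu_\ell(\ord(P_c(\alpha)))$ at every odd prime $\ell\mid D$ and imposes nothing at $\ell=2$, so genuinely new input is required. That missing argument \emph{is} the conjecture.

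A secondary caveat: your reduction takes for granted the dichotomy $d\in\{D,D/2\}$ from Case 2 of Theorem~\ref{thm:efficient}. That proof invokes Lemma~\ref{binomial} with $r=D^2$, whose hypothesis requires $D^2\mid q^{e(n)}-1$, and this can fail exactly in the regime of the conjecture: for $q=5$, $D=6$, $n=5$ one has $\nu_3(5^{10}-1)=1<2=\nu_3(D^2)$. The conclusion can presumably be rescued by applying Lemma~\ref{binomial} twice with $r=D$ (first to $P_c(\beta)$ as a root of $x^D-P_c(\alpha)$, then to $P_c(\gamma)$ as a root of $x^D-P_c(\beta)$, tracking $\ord(P_c(\beta))$ along the way), but since your entire $2$-adic reformulation is built on top of that case analysis, any complete write-up would need to carry out this repair explicitly.
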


\begin{center}
{\bf Acknowledgments}
\end{center}

This work was conducted during a visit of the second author to Carleton University, supported by the Program CAPES-PDSE (process - 88881.134747/2016-01).


\end{document}